
\documentclass{amsart}
\usepackage{amsfonts}
\usepackage{mathrsfs}
\usepackage{amsmath,amssymb}

\theoremstyle{plain}
\newtheorem{theorem}{Theorem}[section]
\newtheorem{lemma}{Lemma}[section]
\newtheorem{proposition}{Proposition}[section]
\newtheorem{corollary}{Corollary}[section]

\theoremstyle{definition}
\newtheorem{definition}{Definition}[section]
\newtheorem{example}{Example}[section]

\theoremstyle{remark}

\newtheorem{question}{Question}[section]

\begin{document}

\title[Some weak versions of the $M_{1}$-spaces]
{Some weak versions of the $M_{1}$-spaces}

\author{Fucai Lin}
\address{Fucai Lin(corresponding author): Department of Mathematics and Information Science,
Zhangzhou Normal University, Zhangzhou 363000, P. R. China}
\email{linfucai2008@yahoo.com.cn}

\author{Shou Lin}
\address{(Shou Lin)Institute of Mathematics, Ningde Teachers' College, Ningde, Fujian
352100, P. R. China} \email{linshou@public.ndptt.fj.cn}

\thanks{Supported by the NSFC (No. 10971185, No. 10971186) and the Educational Department of Fujian Province (No. JA09166) of China.}

\keywords{$m_{i}$-spaces; $s$-$m_{i}$-spaces; $s$-$\sigma$-$m_{i}$-spaces; closure-preserving; strongly monotonically normal; monotonically normal.}
\subjclass[2000]{54B10; 54C05; 54D30}

\begin{abstract}
We mainly introduce some weak versions of the $M_{1}$-spaces, and study some properties about these spaces.
The mainly results are that: (1) If $X$ is a compact scattered space and $i(X)\leq 3$, then $X$ is an $s$-$m_{1}$-space; (2) If $X$ is a strongly monotonically normal space, then $X$ is an $s$-$m_{2}$-space; (3) If $X$ is a $\sigma$-$m_{3}$ space, then $t(X)\leq c(X)$, which extends a result of P.M. Gartside in \cite{CP}. Moreover, some questions are posed in the paper.
\end{abstract}

\maketitle

\section{Introduction}
All spaces are $T_1$ and regular unless stated otherwise, and all
maps are continuous and onto.
The letter $\mathbb{N}$ denotes the set of all positively natural numbers.
Let $X$ be a topological space. Recalled that a family $\mathcal{P}$ of subsets of $X$ is called {\it closure-preserving} if, for any $\mathcal{P}^{\prime}\subset \mathcal{P}$, we have $\overline{\bigcup\mathcal{P}^{\prime}}=\bigcup\{\overline{P}: P\in\mathcal{P}^{\prime}\}$. Moreover, $X$ is called
an $M_{1}$-space \cite{CJ} if $X$ has a $\sigma$-closure-preserving base. It is still a famous open problem (usually called the $M_{1}=M_{3}$ question, see \cite{Gr1984}) whether each stratifiable space is $M_{1}$. M. Ito proved that every $M_{3}$-space with a closure-preserving local base at each point is $M_{1}$ \cite{IM}, and T. Mizokami has just showed that every $M_{1}$-space
has a closure-preserving local base at each point \cite{MT}. Therefore, to give a positive answer to $M_{1}=M_{3}$, it is sufficient to prove that each stratifiable space has a closure-preserving local base at each point.

R.E. Buck first introduced and studied the $m_{i}$ (see Definition~\ref{d0}) properties in \cite{BR}, where he gave some interesting and surprising results about $m_{i}$-spaces. Recently, A. Dow, R. Mart$\acute{\imath}$nez and V.V. Tkachuk have also make some study on spaces with a closure-preserving local base at each point (In fact, they call such spaces for {\it Japanese spaces} in their paper.) \cite{DA}. In this paper we introduce some weak versions of $M_{1}$-spaces, and study some properties and relations on these spaces.

\maketitle

\section{Preliminaries}
\begin{definition}
Let $X$ be a topological space and $\mathcal{B}$ a family of subsets of $X$. $\mathcal{B}$ is called a {\it quasi-base} \cite{Gr1984}
for $X$ if, for each $x$ and open subset $U$ with $x\in U$, there exists a $B\in\mathcal{B}$ such that $x\in\mbox{int}(B)\subset B\subset U$.
\end{definition}

\begin{definition}
Let $X$ be a topological space and $\mathcal{P}$ a pair-family for
$X$, where, for any $P\in
\mathcal{P}$, we denote $P=(P^{\prime}, P^{\prime\prime})$. $\mathcal{P}$ is called a {\it pairbase} \cite{Gr1984} if $\mathcal{P}$
satisfies the following conditions:
\begin{enumerate}
\item For any $(P^{\prime},
P^{\prime\prime})\in \mathcal{P}$, $P^{\prime}\subset P^{\prime\prime}$ and $P^{\prime}$ is open subset of $X$;

\item For any $x\in U\in \tau (X)$, there exists $(P^{\prime},
P^{\prime\prime})\in \mathcal{P}$ such that $x\in P^{\prime}\subset
P^{\prime\prime}\subset U$.
\end{enumerate}

Moreover, a pairbase $\mathcal{P}$ is called a {\it cushioned} if , for each $\mathcal{P}^{\prime}\subset\mathcal{P}$, we have $\overline{\bigcup\{P^{\prime}: (P^{\prime}, P^{\prime\prime})\in\mathcal{P}^{\prime}\}}\subset\bigcup\{P^{\prime\prime}: (P^{\prime}, P^{\prime\prime})\in\mathcal{P}^{\prime}\}$
\end{definition}

\begin{definition}
Let $\mathcal{P}$ be a collection of subsets of $X$.
$\mathcal{P}$ is called {\it closure-preserving} \cite{Gr1984} if, for any $\mathcal{P}^{\prime}\subset \mathcal{P}$, we have $\overline{\bigcup\mathcal{P}^{\prime}}=\bigcup\{\overline{P}: P\in\mathcal{P}^{\prime}\}$.
\end{definition}

A family $\mathcal{A}$ of open subsets of a space $X$ is called {\it a base of $X$ at a set $A$} if $A=\bigcap\mathcal{A}$ and for any neighborhood $U$ of $A$, there is a $V\in\mathcal{A}$ such that $A\subset V\subset U$. A family $\mathcal{A}$ of subsets of a space $X$ is called a {\it quasi-base of $X$ at a set $A$} if $A=\bigcap\mathcal{A}$ and for any neighborhood $U$ of $A$, there is a $V\in\mathcal{A}$ such that $A\subset\mbox{int}(V)\subset V\subset U$.

\begin{definition}\label{d0}
Let $X$ be a space, $x\in X$ and $F$ a closed subset of $X$. Then
\begin{enumerate}
\item $X$ is $m_{1}$ ({\it $\sigma$-$m_{1}$}) {\it at the point $x$} \cite{BR} if $X$ has a closure-preserving ({\it $\sigma$-closure-preserving}) local base at the point $x$. $X$ is called an {\it $m_{1}$-space} ({\it $\sigma$-$m_{1}$-space}) if every point of $X$ is $m_{1}$ ($\sigma$-$m_{1}$);

\item $X$ is {\it s-$m_{1}$} ({\it s-$\sigma$-$m_{1}$}) {\it at $F$}  if $X$ has a closure-preserving ({\it $\sigma$-closure-preserving}) local base at $F$. $X$ is called an {\it s-$m_{1}$-space} ({\it s-$\sigma$-$m_{1}$-space}) if every closed subset of $X$ is $m_{1}$ ($\sigma$-$m_{1}$);

\item $X$ is $m_{2}$ ({\it $\sigma$-$m_{2}$}) {\it at the point $x$} \cite{BR} if $X$ has a closure-preserving ({\it $\sigma$-closure-preserving}) local quasi-base at the point $x$. $X$ is called an {\it $m_{2}$-space} ({\it $\sigma$-$m_{2}$-space}) if every point of $X$ is $m_{2}$ ($\sigma$-$m_{2}$);

\item $X$ is {\it s-$m_{2}$} ({\it s-$\sigma$-$m_{2}$}) {\it at $F$}  if $X$ has a closure-preserving ({\it $\sigma$-closure-preserving}) local quasi-base at $F$. $X$ is called an {\it s-$m_{2}$-space} ({\it s-$\sigma$-$m_{2}$-space}) if every closed subset of $X$ is $m_{2}$ ($\sigma$-$m_{2}$);

\item $X$ is $m_{3}$ ({\it $\sigma$-$m_{3}$}) {\it at the point $x$} \cite{BR} if $X$ has a cushioned local pairbase ({\it $\sigma$-cushioned local pairbase}) at the point $x$. $X$ is called an {\it $m_{3}$-space} ({\it $\sigma$-$m_{3}$-space}) if every point of $X$ is $m_{3}$ ($\sigma$-$m_{3}$);

\item $X$ is {\it s-$m_{3}$} ({\it s-$\sigma$-$m_{3}$}) {\it at $F$}  if $X$ has a cushioned local pairbase ({\it $\sigma$-cushioned local pairbase}) at $F$. $X$ is called an {\it s-$m_{3}$-space} ({\it s-$\sigma$-$m_{3}$-space}) if every closed subset of $X$ is $m_{3}$ ($\sigma$-$m_{3}$).

\end{enumerate}
\end{definition}

It is easy to see that

\setlength{\unitlength}{1cm}
\begin{picture}(15,5.5)\thicklines
 \put(3,0){\makebox(0,0){$\sigma$-$m_{1}$}}
 \put(3.5,0){\vector(1,0){1}}
 \put(5,0){\makebox(0,0){$\sigma$-$m_{2}$}}
 \put(5.5,0){\vector(1,0){1}}
 \put(7,0){\makebox(0,0){$\sigma$-$m_{3}$}}
 \put(3,1.2){\vector(0,-1){1}}
 \put(5,1.2){\vector(0,-1){1}}
 \put(7,1.2){\vector(0,-1){1}}
 \put(3,1.4){\makebox(0,0){$m_{1}$}}
 \put(5,1.4){\makebox(0,0){$m_{2}$}}
 \put(7,1.4){\makebox(0,0){$m_{3}$}}
 \put(3.5,1.4){\vector(1,0){1}}
 \put(5.5,1.4){\vector(1,0){1}}
 \put(3,2.6){\vector(0,-1){1}}
 \put(5,2.6){\vector(0,-1){1}}
 \put(7,2.6){\vector(0,-1){1}}
 \put(3,2.8){\makebox(0,0){$s$-$m_{1}$}}
 \put(5,2.8){\makebox(0,0){$s$-$m_{2}$}}
 \put(7,2.8){\makebox(0,0){$s$-$m_{3}$}}
 \put(3.5,2.8){\vector(1,0){1}}
 \put(5.5,2.8){\vector(1,0){1}}
 \put(3,3){\vector(0,1){1}}
 \put(5,3){\vector(0,1){1}}
 \put(7,3){\vector(0,1){1}}
 \put(3,4.2){\makebox(0,0){$s$-$\sigma$-$m_{1}$}}
 \put(5.1,4.2){\makebox(0,0){$s$-$\sigma$-$m_{2}$}}
 \put(7.3,4.2){\makebox(0,0){$s$-$\sigma$-$m_{3}$}}
 \put(3.5,4.2){\vector(1,0){1}}
 \put(5.7,4.2){\vector(1,0){1}.}
\end{picture}

\vskip 1.4cm\setlength{\parindent}{0.5cm}

A space $X$ is called {\it a stratifiable or $M_{3}$-space} if it has a $\sigma$-cushioned pairbase. By \cite{MT}, we have that $X$ is $M_{1}$-space iff $X$ is $M_{3}$ and $m_{1}$ iff $X$ is $M_{3}$ and $s$-$m_{1}$. Moreover, if we let $X$ be a regular stratifiable space, then all the spaces on the above are equivalent, see \cite{BR, IM, MT}.

For each space $X$, we let $I(X)$ be the set of all isolated points of $X$. If $X$ is scattered, then let $X_{0}=X$; proceeding inductively assume that $\alpha$ is an ordinal and we constructed $X_{\beta}$ for all $\beta <\alpha$. If $\alpha =\beta +1$ for some $\beta$, then we let $X_{\alpha}=X_{\beta}\setminus I(X_{\beta})$. If $\alpha$ is a limit ordinal, then we let $X_{\alpha}=\bigcap\{X_{\beta}: \beta <\alpha\}$. The first ordinal $\alpha$ such that $X_{\alpha}=\emptyset$ is called the {\it dispersion index of $X$} and is denoted by $i(X)$, see \cite{DA}.

Reader may refer to
\cite{E1989, Gr1984} for notations and terminology not
explicitly given here.
\bigskip

\section{$s$-$m_{1}$ and $s$-$\sigma$-$m_{1}$ spaces}
In \cite{DA}, A. Dow, R. Mart$\acute{\imath}$nez and V.V. Tkachuk proved that each space with a finite number of non-isolated points is an $m_{1}$-space, and each compact scattered space $X$ and $i(X)\leq 3$ is an $m_{1}$-space. However, we shall see that there exists an $m_{1}$-space $X$ such that $X$ is non-$s$-$m_{1}$, see Example~\ref{e0}. But we have the follow Theorems~\ref{t0} and~\ref{t3}, which extend the results of the above.

\begin{lemma}\label{l1}
Let $\mathcal{P}$ be a topological property such that (a) if space $X$ has $\mathcal{P}$ then $X$
has $m_{i}$ and (b) if $X$ has $\mathcal{P}$ and $A$ is a closed subspace then $X/A$ has $\mathcal{P}$. Then
every space $X$ with property $\mathcal{P}$ has the $s$-variant property $s$-$m_{i}$.
\end{lemma}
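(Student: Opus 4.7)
The plan is to reduce the $s$-$m_{i}$ property at an arbitrary closed set $F\subset X$ to the $m_{i}$ property at a single point, by passing to the quotient $X/F$. Given $X$ with property $\mathcal{P}$ and $F\subset X$ closed, let $q\colon X\to X/F$ be the quotient map collapsing $F$ to a single point $\ast$. Hypothesis (b) gives $X/F\in\mathcal{P}$, and hypothesis (a) then gives a closure-preserving (respectively $\sigma$-closure-preserving) local base, local quasi-base, or cushioned local pairbase $\mathcal{V}$ at $\ast$, according to whether $i=1,2,3$. I will take the pulled-back family $\mathcal{U}=\{q^{-1}(V):V\in\mathcal{V}\}$ (or its coordinate-wise analogue on pairs) and verify that it witnesses the corresponding $s$-$m_{i}$ property at $F$.

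The central technical point will be the closure identity
\[
\overline{q^{-1}(B)}\;=\;q^{-1}(\overline{B})\qquad\text{for every }B\subset X/F\text{ with }\ast\in B.
\]
This relies on two facts: $q$ is a closed map (which is automatic because $F$ is closed, so every closed $C\subset X$ either misses $F$ or gives $q^{-1}(q(C))=C\cup F$ closed), whence $q(\overline{A})=\overline{q(A)}$ for all $A$; and $q^{-1}(B)\supset F$ is $q$-saturated, so $q^{-1}(q(\overline{q^{-1}(B)}))=\overline{q^{-1}(B)}$. Combining these yields the identity. From it, closure-preservation of $\mathcal{V}$ transfers verbatim: for any $\mathcal{V}'\subset\mathcal{V}$,
\[
\overline{\bigcup_{V\in\mathcal{V}'} q^{-1}(V)}\;=\;q^{-1}\!\Bigl(\overline{\bigcup_{V\in\mathcal{V}'} V}\Bigr)\;=\;q^{-1}\!\Bigl(\bigcup_{V\in\mathcal{V}'}\overline{V}\Bigr)\;=\;\bigcup_{V\in\mathcal{V}'}\overline{q^{-1}(V)}.
\]

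For $i=1$, this is exactly the closure-preservation of $\mathcal{U}$, and $\mathcal{U}$ is a local base at $F$ since any open $W\supset F$ has $q(W)$ open in $X/F$ containing $\ast$, hence contains some $V\in\mathcal{V}$, so $q^{-1}(V)\subset W$. For $i=2$, the observation $\ast\in\operatorname{int}(Q)$ implies $F\subset q^{-1}(\operatorname{int}(Q))\subset\operatorname{int}(q^{-1}(Q))$, so each $q^{-1}(Q)$ is a quasi-base element at $F$, and the closure identity again supplies closure-preservation. For $i=3$, applying the pullback coordinate-wise to the pairs $(P',P'')\in\mathcal{V}$ and invoking the closure identity on the first coordinates yields a cushioned local pairbase at $F$. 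The $\sigma$-variants follow by carrying out the pullback on each of the countably many closure-preserving (or cushioning) layers separately. I expect the only real obstacle to be the verification of the closure identity displayed above; once it is in place, the three cases reduce to routine bookkeeping.
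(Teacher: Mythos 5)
Your proposal follows exactly the paper's argument: pass to the quotient $X/F$, use (b) and (a) to get an $m_i$ structure at the collapsed point, and pull it back to an outer base at $F$. The only difference is that you explicitly verify the closure identity $\overline{q^{-1}(B)}=q^{-1}(\overline{B})$ that the paper leaves implicit in the phrase ``the set $A$ has a `nice' outer base in $X$,'' and that verification is correct.
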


\begin{proof}
Take any space $X$ with property $\mathcal{P}$, and take any closed subset $A$ of $X$. Then, by (b),
$X/A$ has property $\mathcal{P}$, and so is $m_{i}$ (by (a)). In particular, the point $A$ in $X/A$
is an $m_{i}$ point, and so the set $A$ has a `nice' outer base in $X$. From which it
follows that $X$ has $s$-$m_{i}$.
\end{proof}

\begin{theorem}\label{t0}
Each space with a finite number of non-isolated points is an $s$-$m_{1}$-space.
\end{theorem}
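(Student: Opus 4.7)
The plan is to apply Lemma~\ref{l1} with the topological property $\mathcal{P}$ being ``$X$ has only finitely many non-isolated points''. Once I verify hypotheses (a) and (b) of that lemma for this $\mathcal{P}$, the theorem follows immediately: $X$ has $\mathcal{P}$, so $X$ has the $s$-variant $s$-$m_{1}$.

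For hypothesis (a), I would simply invoke the result of A. Dow, R. Mart\'{\i}nez and V.V. Tkachuk from \cite{DA} that is quoted at the start of the section: every space with only finitely many non-isolated points is an $m_{1}$-space. So (a) is free.

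The real content is verifying (b): if $X$ has finitely many non-isolated points and $A\subset X$ is closed, then $X/A$ again has finitely many non-isolated points. Let $q\colon X\to X/A$ be the quotient map and write $N(Y)$ for the set of non-isolated points of a space $Y$. The restriction $q|_{X\setminus A}\colon X\setminus A\to X/A\setminus\{[A]\}$ is a homeomorphism (because $X\setminus A$ is open in $X$ and $q$ is injective there), so a point $y\in X/A\setminus\{[A]\}$ is non-isolated in $X/A$ precisely when its preimage $x\in X\setminus A$ is non-isolated in $X$. Hence $N(X/A)\subset q(N(X)\setminus A)\cup\{[A]\}$, which has cardinality at most $|N(X)|+1<\infty$. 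Thus $X/A$ has the property $\mathcal{P}$, and (b) is established. The only mild subtlety (and the closest thing to an obstacle) is the observation that no ``new'' non-isolated points can be created outside the collapsed point, which rests on the fact that $X\setminus A$ is open; after that the proof is just bookkeeping.
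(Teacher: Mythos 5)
Your proposal is correct and follows exactly the route of the paper's own proof: invoke Lemma~\ref{l1} with $\mathcal{P}$ being ``finitely many non-isolated points,'' citing \cite{DA} for (a) and checking that $X/A$ again has finitely many non-isolated points for (b). The only difference is that the paper dismisses (b) as ``easy to see,'' whereas you supply the (correct) details via the homeomorphism $X\setminus A\cong X/A\setminus\{[A]\}$.
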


\begin{proof}
Let $X$ be a space with a finite number of non-isolated points, and let $A$ be a closed subspace of $X$. It is easy to see that $X/A$ has finite number of non-isolated points. By \cite[Proposition 2.9]{DA}, a space with a finite number of non-isolated points is $m_{i}$, and thus $X$ is an $s$-$m_{1}$-space by Lemma~\ref{l1}.
\end{proof}

\begin{theorem}\label{t3}
If $X$ is a compact scattered space and $i(X)\leq 3$, then $X$ is an $s$-$m_{1}$-space.
\end{theorem}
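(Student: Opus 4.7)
The natural plan is to deduce Theorem~\ref{t3} from Lemma~\ref{l1}, applied with $i=1$ and $\mathcal{P}$ taken to be the property ``compact, scattered, and $i(X)\leq 3$''. Condition (a) of Lemma~\ref{l1} for this $\mathcal{P}$ is precisely the theorem of Dow--Mart\'{\i}nez--Tkachuk quoted at the start of this section (and cited as \cite{DA}), so the entire burden of the proof is to verify (b): if $X$ satisfies $\mathcal{P}$ and $A\subseteq X$ is closed, then $X/A$ satisfies $\mathcal{P}$ as well.

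Compactness of $X/A$ is immediate, and $X/A$ is Hausdorff since $X$ is compact Hausdorff and $A$ is closed. To control the derived sets of $X/A$ I would use two elementary observations about the quotient map $q:X\to X/A$, writing $*=q(A)$: (i) $q$ restricts to a homeomorphism from $X\setminus A$ onto $X/A\setminus\{*\}$; and (ii) a point $y\in X\setminus A$ is isolated in $X/A$ iff $\{y\}$ is open in $X$, i.e., iff $y\in I(X)$. Consequently $I(X/A)=(I(X)\setminus A)\cup\{*\}$ if $A$ is open in $X$, and $I(X/A)=I(X)\setminus A$ otherwise.

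Iterating, and using regularity to shrink neighbourhoods of points in $X\setminus A$ away from $A$, the same argument gives: for $y\in X_{1}\setminus A$, $y$ is isolated in $(X/A)_{1}$ iff $y$ is isolated in $X_{1}$, i.e., iff $y\in X_{1}\setminus X_{2}$. Therefore $(X/A)_{2}\subseteq (X_{2}\setminus A)\cup\{*\}$. Now $X_{2}$ is closed in the compact space $X$ and discrete (since $i(X)\leq 3$ forces $X_{2}=I(X_{2})$), hence finite. So $(X/A)_{2}$ is a finite Hausdorff space, therefore discrete, and $(X/A)_{3}=\emptyset$. This gives $i(X/A)\leq 3$ and in particular $X/A$ is scattered, completing~(b); Lemma~\ref{l1} then yields that $X$ is an $s$-$m_{1}$-space.

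The main obstacle is the derived-set bookkeeping in the third paragraph: one has to verify that an isolated point of a derived set lifts to a neighbourhood in $X$ which both avoids $A$ and witnesses the isolation, and at each stage to track separately whether $*$ is isolated (depending on whether $A$ has a neighbourhood whose non-$A$ part is contained in the appropriate level of isolated points). Once this identification is carried out, the fact that $X_{2}$ is a finite discrete set reduces everything to the pigeonhole bound $(X/A)_{3}=\emptyset$, and the rest of the argument is a direct invocation of Lemma~\ref{l1}.
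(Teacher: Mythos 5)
Your proposal is correct and follows exactly the route the paper takes: apply Lemma~\ref{l1} with $\mathcal{P}=$ ``compact scattered with dispersion index at most $3$,'' citing \cite[Theorem 3.1]{DA} for condition (a). The only difference is that the paper dismisses condition (b) with ``it is easy to see,'' whereas you carry out the derived-set bookkeeping for $X/A$ explicitly (and correctly).
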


\begin{proof}
Let $X$ be a compact scattered space and $i(X)\leq 3$, and let $A$ be a closed subspace of $X$. It is easy to see that $X/A$ is also a compact scattered space and $i(X)\leq 3$. By \cite[Theorem 3.1]{DA}, a compact scattered space and $i(X)\leq 3$ is $m_{i}$, and thus $X$ is an $s$-$m_{1}$-space by Lemma~\ref{l1}.
\end{proof}

The proofs of the following Propositions~\ref{p0}, ~\ref{p1} and~\ref{p2} are easy, and so we omit them.

\begin{proposition}\label{p0}
If $X$ has a clopen closure-preserving neighborhood base at any closed set then $X$ is hereditarily $s$-$m_{1}$. In particular, any extremally disconnected $s$-$m_{1}$ space is hereditarily $s$-$m_{1}$.
\end{proposition}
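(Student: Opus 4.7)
My plan is to address part (1) by a restriction argument and deduce part (2) by verifying the hypothesis of (1) in the extremally disconnected case.

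For part (1), fix a subspace $Y$ of $X$ and a closed subset $F$ of $Y$. Since $\overline{F}^{X}$ is closed in $X$, the hypothesis supplies a clopen closure-preserving neighborhood base $\mathcal{B}$ at $\overline{F}^{X}$ in $X$. I claim that $\mathcal{C} = \{B \cap Y : B \in \mathcal{B}\}$ is a closure-preserving local base at $F$ in $Y$; this will show every subspace is $s$-$m_{1}$. The closure-preserving property of $\mathcal{C}$ in $Y$ is immediate: each member is clopen in $Y$, and for any subfamily $\mathcal{B}' \subseteq \mathcal{B}$ the union $\bigcup \mathcal{B}'$ is closed in $X$ by the cp condition on $\mathcal{B}$ together with the fact that every $B$ equals its own closure in $X$, so $\bigcup_{B \in \mathcal{B}'} (B \cap Y) = (\bigcup \mathcal{B}') \cap Y$ is closed in $Y$. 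For the local-base property, given any open neighborhood $U = V \cap Y$ of $F$ in $Y$, I would exploit the identity $\overline{Y \setminus U}^{X} \cap Y = Y \setminus U$ (so that $F \cap \overline{Y \setminus U}^{X} = \emptyset$) together with the nbhd-base property of $\mathcal{B}$ at $\overline{F}^{X}$ to produce $B \in \mathcal{B}$ with $B \cap Y \subseteq U$.

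For part (2), suppose $X$ is extremally disconnected and $s$-$m_{1}$; I verify the hypothesis of (1). Fix closed $F \subseteq X$ with a closure-preserving local base $\mathcal{B}$, and set $\widetilde{\mathcal{B}} = \{\overline{B}^{X} : B \in \mathcal{B}\}$. Each $\overline{B}^{X}$ is clopen by extreme disconnectedness. The family $\widetilde{\mathcal{B}}$ is closure-preserving, since for $\mathcal{B}' \subseteq \mathcal{B}$ the cp of $\mathcal{B}$ yields
\[
\overline{\bigcup_{B \in \mathcal{B}'} \overline{B}^{X}}^{X} \subseteq \overline{\bigcup \mathcal{B}'}^{X} = \bigcup_{B \in \mathcal{B}'} \overline{B}^{X},
\]
and the reverse inclusion is trivial. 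It is a neighborhood base at $F$: given open $V \supseteq F$, regularity of $X$ yields open $W$ with $F \subseteq W \subseteq \overline{W}^{X} \subseteq V$, and the base property of $\mathcal{B}$ gives $B \in \mathcal{B}$ with $B \subseteq W$, so $\overline{B}^{X} \subseteq \overline{W}^{X} \subseteq V$. Thus the hypothesis of part (1) holds, and hereditary $s$-$m_{1}$ follows.

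The main obstacle is the local-base step in part (1) when $Y$ is not closed in $X$: it can happen that $\overline{F}^{X}$ meets $\overline{Y \setminus U}^{X}$ inside $X \setminus Y$, in which case no clopen-in-$X$ neighborhood of $\overline{F}^{X}$ restricts inside $U$. Overcoming this will likely require enlarging $\mathcal{C}$ by additional clopen-in-$Y$ sets of the form $V' \cap Y$ with $V'$ open in $X$ and $V' \cap Y = F' \cap Y$ for some closed $F' \subseteq X$, and then checking that the enriched family remains closure-preserving.
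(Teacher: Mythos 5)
The paper offers no proof of this proposition (it is declared easy and omitted), so your proposal must stand on its own, and it does not: the local-base step of part (1) is precisely the content of the proposition, and you leave it unproved. Your closing paragraph identifies the obstruction correctly, but it is worth seeing that it defeats the entire strategy, not merely the particular family $\mathcal{C}$. If $B$ is clopen in $X$ and $F\subset B$, then $\overline{F}^{X}\subset\overline{B}^{X}=B$; if in addition $\overline{F}^{X}$ meets $\overline{Y\setminus U}^{X}$, then the open set $B$ meets $\overline{Y\setminus U}^{X}$ and hence meets $Y\setminus U$ itself, so $B\cap Y\not\subset U$. Thus for such a $U$ no trace on $Y$ of a clopen subset of $X$ containing $F$ can witness the base property, and enlarging $\mathcal{C}$ by further sets of the form (clopen in $X$)$\,\cap\,Y$ cannot help. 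To finish you must either show that the hypothesis on $X$ excludes the configuration $\overline{F}^{X}\cap\overline{Y\setminus U}^{X}\neq\emptyset$, or construct the base from sets that are clopen only in $Y$ (e.g.\ traces of open subsets of $X$ that happen to be relatively closed); you do neither, and neither is a routine verification. This is a missing idea, not a deferred detail.

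Two smaller points. In part (2) you invoke ``regularity'' to produce an open $W$ with $F\subset W\subset\overline{W}\subset V$ for a \emph{closed} set $F$; separating a closed set from the disjoint closed set $X\setminus V$ in this way is normality, not regularity, and normality is not among the paper's standing hypotheses, so this step also needs justification (the clopenness of each $\overline{B}$ and the closure-preservation of $\{\overline{B}^{X}:B\in\mathcal{B}\}$ are fine). Finally, the paper's definition of a base at a set $A$ requires $\bigcap\mathcal{A}=A$, a condition you never verify for $\mathcal{C}$.
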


\begin{proposition}\label{p1}
Suppose that $X$ is a space and a closed set $F\subset X$ has an open neighborhood base in $X$ which is well-ordered by the reverse inclusion. Then $X$ is $s$-$m_{1}$ at $F$.
\end{proposition}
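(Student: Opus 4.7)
The plan is to show that the given neighborhood base is itself already closure-preserving, so it witnesses $s$-$m_1$ at $F$ with no further modification. Accordingly, I would make the well-ordering explicit: write the base as $\mathcal{B}=\{U_\alpha : \alpha<\kappa\}$ where $\kappa$ is an ordinal and the indexing satisfies $\alpha<\beta\Rightarrow U_\beta\subset U_\alpha$ (this is exactly what "well-ordered by reverse inclusion" means).

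The core observation is the following: for any nonempty subfamily $\mathcal{A}=\{U_\alpha:\alpha\in S\}\subset\mathcal{B}$, the set of indices $S\subset\kappa$ has a least element $\alpha_0=\min S$, and by the monotonicity we have $U_\alpha\subset U_{\alpha_0}$ for every $\alpha\in S$. Hence $\bigcup\mathcal{A}=U_{\alpha_0}$, so $\overline{\bigcup\mathcal{A}}=\overline{U_{\alpha_0}}$, and this already equals $\bigcup\{\overline{U_\alpha}:\alpha\in S\}$ because $\overline{U_{\alpha_0}}$ itself is one member of that union while every other $\overline{U_\alpha}$ is contained in it. This verifies the closure-preserving property directly from the well-ordering.

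Finally, I would note that $\mathcal{B}$ is a local base of $F$ in the sense of the definition recalled before Definition~\ref{d0}: the neighborhood property holds by hypothesis, and $F=\bigcap\mathcal{B}$ follows from $X$ being $T_1$, since for each $x\notin F$ the open set $X\setminus\{x\}$ contains $F$ and hence contains some $U_\alpha\in\mathcal{B}$, so $x\notin U_\alpha$. Combining this with the previous paragraph gives a closure-preserving open local base of $F$, i.e. $X$ is $s$-$m_1$ at $F$. There is no real obstacle here; the whole argument is a direct application of the well-ordering principle to $\kappa$, which is presumably why the authors choose to omit the proof.
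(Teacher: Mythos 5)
Your argument is correct: the paper omits the proof of Proposition~\ref{p1} as ``easy,'' and the evident intended argument is exactly yours --- every nonempty subfamily of a base well-ordered by reverse inclusion has a largest member (the one with least index), so the union of the subfamily is that single member and the closure-preserving identity is immediate, while $F=\bigcap\mathcal{B}$ follows from $T_1$ as you note. Nothing is missing.
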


\begin{proposition}\label{p2}
If $X$ is an $s$-$m_{1}$-space, and $D$ is dense in $X$. Then $D$ is also an $s$-$m_{1}$-subspace.
\end{proposition}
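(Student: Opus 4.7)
My plan is to transport a closure-preserving local base from $X$ down to $D$ by intersection. Take a closed subset $F$ of $D$. Since $F$ is closed in $D$ and $D$ is dense in $X$, we have $F = \overline{F}^{X}\cap D$, so $\overline{F}^{X}$ is a closed subset of $X$ whose ``extra'' points $\overline{F}^{X}\setminus F$ all lie in $X\setminus D$. Applying the $s$-$m_{1}$ hypothesis to this closed set yields a closure-preserving local base $\mathcal{U}$ of $\overline{F}^{X}$ in $X$; the candidate for $F$ in $D$ is $\mathcal{V}=\{U\cap D:U\in\mathcal{U}\}$.

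The easier half is closure-preservation of $\mathcal{V}$ in $D$, and it follows directly from the density of $D$. For any open $U\subseteq X$, $U\cap D$ is dense in $U$, hence $\overline{U\cap D}^{D}=\overline{U}^{X}\cap D$; the same identity extends to arbitrary unions, giving
\[
\overline{\bigcup\nolimits_{U\in\mathcal{U}'}(U\cap D)}^{D}
=\overline{\bigcup\mathcal{U}'}^{X}\cap D
\]
for every $\mathcal{U}'\subseteq\mathcal{U}$. Combined with the closure-preserving property of $\mathcal{U}$ in $X$, this yields $\overline{\bigcup_{U\in\mathcal{U}'}(U\cap D)}^{D}=\bigcup_{U\in\mathcal{U}'}\overline{U\cap D}^{D}$, which is what is required.

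It remains to check cofinality: given any open $V\subseteq D$ with $F\subseteq V$, I must produce $U\in\mathcal{U}$ with $U\cap D\subseteq V$. The plan is to write $V=V_{0}\cap D$ for some $V_{0}$ open in $X$ and then enlarge $V_{0}$ to an open set $V_{1}\subseteq X$ satisfying $\overline{F}^{X}\subseteq V_{1}$ and $V_{1}\cap D=V$; cofinality of $\mathcal{U}$ at $\overline{F}^{X}$ then produces $U\in\mathcal{U}$ with $U\subseteq V_{1}$, whence $U\cap D\subseteq V_{1}\cap D=V$.

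The hard step is precisely this enlargement. The required $V_{1}$ must cover $\overline{F}^{X}\setminus V_{0}\subseteq X\setminus D$ by an open set whose intersection with $D$ stays inside $V$. The two ingredients to exploit are the inclusion $\overline{F}^{X}\setminus F\subseteq X\setminus D$, which locates the ``missing'' points outside $D$, and the density of $D$, which controls how open sets interact with $D$; the delicate case that needs care is a point $y\in\overline{F}^{X}\setminus F$ that is simultaneously a limit in $X$ of $F$ and of $D\setminus V$, since a naive open enlargement around $y$ would pull in points of $D\setminus V$ and so fail $V_{1}\cap D\subseteq V$. This is the only nontrivial obstacle, and the rest of the argument is formal.
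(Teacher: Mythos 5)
The paper does not actually supply a proof of this proposition (it is declared easy and omitted), so your attempt can only be judged on its own terms; and it has a genuine gap precisely at the step you yourself flag, the cofinality of $\mathcal{V}=\{U\cap D: U\in\mathcal{U}\}$. The closure-preservation half is correct: for open $U$ and dense $D$ one has $\overline{U\cap D}^{D}=\overline{U}^{X}\cap D$, and this passes to unions. But the ``enlargement'' $V_{1}$ you postpone to the end does not exist in general, and consequently $\mathcal{V}$ need not be a local base of $F$ in $D$ at all. Concretely, take $X=[0,1]$, $D=(0,1]$ and $F=\{1/n: n\in\mathbb{N}\}$, which is closed in $D$ with $\overline{F}^{X}=F\cup\{0\}$. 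Let $V=\bigcup_{n}\bigl((1/n-\epsilon_{n},1/n+\epsilon_{n})\cap D\bigr)$ with the $\epsilon_{n}$ so small that these intervals are pairwise disjoint and miss every midpoint $\frac{1}{2}(1/n+1/(n+1))$. Then $V$ is an open neighborhood of $F$ in $D$, but every open $U\subseteq X$ containing $\overline{F}^{X}$ contains an interval $[0,\delta)$ and hence infinitely many of those midpoints, which lie in $D\setminus V$; so $U\cap D\not\subseteq V$ for \emph{every} such $U$, and in particular no open $V_{1}\supseteq\overline{F}^{X}$ satisfies $V_{1}\cap D\subseteq V$. The point $y=0$, which is simultaneously a limit of $F$ and of $D\setminus V$, is exactly the ``delicate case'' you name --- but it is not a technicality to be handled with more care; it defeats the entire strategy of intersecting a single outer base of $\overline{F}^{X}$ with $D$.

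Declaring this ``the only nontrivial obstacle'' while leaving it unresolved is therefore not a proof: the obstacle is the whole content of the statement. The open neighborhoods of $F$ in $D$ correspond to open subsets of $X$ that contain $F$ but may omit part of $\overline{F}^{X}\setminus F$, and a neighborhood base of the larger closed set $\overline{F}^{X}$ simply cannot be cofinal among them. Any correct argument must produce a closure-preserving family adapted to $F$ itself --- for instance by invoking the $s$-$m_{1}$ hypothesis at closed sets other than $\overline{F}^{X}$ and combining the resulting families while keeping closure-preservation --- and none of that is present here. As it stands the proposal establishes only the easy half (closure-preservation of traces of open families under density) and not the existence of the base.
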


A map $f: X\rightarrow Y$ is called {\it quasi-open} if, for each non-empty open subset $U$ of $X$, the interior of $f(U)$ is non-empty. $f$ is called an {\it irreducible map} if, for each proper closed subset $F$ of $X$, we have $f(F)\neq Y$.

\begin{lemma}\label{l0}\cite{GG}
Let $f: X\rightarrow Y$ be a quasi-open closed map. If $\mathscr{B}$ is a closure-preserving open family of $X$, then $\varphi =\{\mbox{int}(f(B)): B\in\mathscr{B}\}$ is a closure-preserving open family of $Y$.
\end{lemma}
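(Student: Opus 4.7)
The plan is to prove the two inclusions
$$\overline{\bigcup_{B \in \mathscr{B}'} \mathrm{int}(f(B))} = \bigcup_{B \in \mathscr{B}'} \overline{\mathrm{int}(f(B))}$$
for any subfamily $\mathscr{B}' \subset \mathscr{B}$. The reverse inclusion $\supset$ is automatic, so the content lies entirely in the forward inclusion. Each $\mathrm{int}(f(B))$ is tautologically open, so once closure-preservingness is established, the statement is proved.

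The key auxiliary fact I would establish first is: for a quasi-open map $f$ and any open set $U \subset X$, one has $\overline{\mathrm{int}(f(U))} = \overline{f(U)}$. To see this, if some $y \in \overline{f(U)}$ failed to lie in $\overline{\mathrm{int}(f(U))}$, pick an open $V \ni y$ disjoint from $\mathrm{int}(f(U))$ and meeting $f(U)$; then $W = f^{-1}(V) \cap U$ is a nonempty open subset of $X$, so by quasi-openness $\mathrm{int}(f(W)) \neq \emptyset$, and $\mathrm{int}(f(W)) \subset V \cap \mathrm{int}(f(U))$, contradiction.

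With that in hand, the main computation chains together three ingredients: (i) $\bigcup_{B \in \mathscr{B}'} \mathrm{int}(f(B)) \subset f\bigl(\bigcup_{B \in \mathscr{B}'} B\bigr)$; (ii) for the continuous closed map $f$, $\overline{f(A)} = f(\overline{A})$ for every $A \subset X$; (iii) $\mathscr{B}$ is closure-preserving in $X$. Taking closures in (i) and using (ii), then (iii), then (ii) again in the reverse direction, gives
$$\overline{\bigcup_{B \in \mathscr{B}'} \mathrm{int}(f(B))} \subset f\Bigl(\overline{\bigcup_{B \in \mathscr{B}'} B}\Bigr) = f\Bigl(\bigcup_{B \in \mathscr{B}'} \overline{B}\Bigr) = \bigcup_{B \in \mathscr{B}'} \overline{f(B)},$$
and the auxiliary fact replaces each $\overline{f(B)}$ with $\overline{\mathrm{int}(f(B))}$, which is exactly what is needed.

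The only genuinely subtle step is the auxiliary lemma on quasi-open maps; once that is in place the rest is a routine chain of set-theoretic manipulations. So I expect the main obstacle to be formulating and proving that quasi-open replacement $\overline{\mathrm{int}(f(U))} = \overline{f(U)}$ cleanly. Everything else is bookkeeping: one must be careful to use both hypotheses on $f$ (closedness for commuting closures with $f$, quasi-openness for the replacement step) in the correct spots, and to exploit closure-preservingness of $\mathscr{B}$ in $X$ exactly once, at the middle equality in the display above.
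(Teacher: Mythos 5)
Your proof is correct. Note that the paper itself gives no proof of this lemma --- it is quoted from Kao's paper \cite{GG} --- so there is no in-paper argument to compare against; your write-up supplies a valid self-contained proof. The two ingredients are used exactly where they must be: closedness (plus continuity, which the paper assumes of all maps) gives $\overline{f(A)}=f(\overline{A})$, and quasi-openness gives the replacement $\overline{\mathrm{int}(f(U))}=\overline{f(U)}$ for open $U$, whose contradiction argument via $W=f^{-1}(V)\cap U$ is sound since $\mathrm{int}(f(W))\subset V\cap\mathrm{int}(f(U))$.
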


\begin{theorem}\label{t1}
Let $f: X\rightarrow Y$ be a quasi-open closed map. If $X$ is an $s$-$m_{1}$-space, then $Y$ is also an $s$-$m_{1}$ space.
\end{theorem}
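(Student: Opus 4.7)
My plan is to reduce to Lemma~\ref{l0} by pulling a closure-preserving local base from a closed set in $X$ down to its image in $Y$, and then to verify the local base property by exploiting the closed-map hypothesis. Fix any closed $F\subset Y$ and let $E=f^{-1}(F)$, which is closed in $X$ by continuity. Since $X$ is $s$-$m_{1}$, there is a closure-preserving local base $\mathscr{B}$ (consisting of open sets) at $E$. By Lemma~\ref{l0}, $\varphi=\{\mathrm{int}(f(B)):B\in\mathscr{B}\}$ is then a closure-preserving family of open sets in $Y$; the task is to show it is a local base at $F$.

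For the local base condition, let $U$ be any open neighborhood of $F$ in $Y$. Then $f^{-1}(U)$ is an open neighborhood of $E$ in $X$, so there is $B\in\mathscr{B}$ with $E\subset B\subset f^{-1}(U)$, and consequently $\mathrm{int}(f(B))\subset f(B)\subset U$. The delicate point is to show $F\subset\mathrm{int}(f(B))$, and this is where I use that $f$ is closed rather than merely quasi-open (quasi-openness by itself only yields that $\mathrm{int}(f(B))$ is nonempty). The trick is to consider $V:=Y\setminus f(X\setminus B)$: this set is open because $f$ is closed, and it is contained in $f(B)$ since $f$ is surjective, so $V\subset\mathrm{int}(f(B))$. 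For each $y\in F$ one has $f^{-1}(y)\subset E\subset B$, so $y\notin f(X\setminus B)$, i.e., $y\in V$. Thus $F\subset V\subset\mathrm{int}(f(B))\subset U$, as required.

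It remains to check $F=\bigcap\varphi$, which uses regularity of $Y$: given $y\notin F$ pick an open neighborhood $W$ of $F$ with $y\notin\overline{W}$ (or just disjoint from a neighborhood of $y$); by the previous paragraph some member of $\varphi$ sits inside $W$ and hence misses $y$. Therefore $\varphi$ is a closure-preserving local base at $F$, and since $F$ was an arbitrary closed subset of $Y$, the space $Y$ is $s$-$m_{1}$. The main obstacle, as indicated, is the passage $F\subset\mathrm{int}(f(B))$, which is precisely the place where the closed-map hypothesis does real work.
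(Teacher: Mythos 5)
Your proposal is correct and follows essentially the same route as the paper: pull back $F$ to $f^{-1}(F)$, take a closure-preserving local base there, push it forward via Lemma~\ref{l0}, and use closedness of $f$ to get $F\subset\mathrm{int}(f(B))$. You merely spell out the details (the set $Y\setminus f(X\setminus B)$ and the verification that $\varphi$ is a local base) that the paper leaves as ``easy to see.''
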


\begin{proof}
Let $F$ be any closed set of $Y$ and $F\neq Y$. Then $f^{-1}(F)$ is closed in $X$. Since $X$ is $s$-$m_{1}$,
$f^{-1}(F)$ has a closure-preserving open neighborhood base $\mathscr{B}$ at $f^{-1}(F)$. Since $f$ is a quasi-open closed map, the family $\varphi =\{\mbox{int}(f(B)): B\in\mathscr{B}\}$ is a closure-preserving open family of $Y$ by Lemma~\ref{l0}. Moreover, because $f$ is a closed map, we have $F\subset \mbox{int}(f(B))$ for each $B\in\mathscr{B}$. It is easy to see that $\varphi$ is an open neighborhood base at $F$ in $Y$.
\end{proof}

\begin{corollary}\label{c0}
Closed and irreducible maps preserve $s$-$m_{1}$-spaces.
\end{corollary}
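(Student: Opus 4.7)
The plan is to deduce Corollary~\ref{c0} directly from Theorem~\ref{t1} by verifying that every closed irreducible surjection is automatically quasi-open. Once that is established, Theorem~\ref{t1} transfers the $s$-$m_{1}$ property from $X$ to $Y$ with no further work, so the corollary reduces to a single observation about the interplay of the three conditions ``closed'', ``irreducible'', and ``onto''.

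To establish the quasi-openness of such a map $f\colon X\to Y$, I would start with an arbitrary non-empty open set $U\subset X$ and look at its complement $F=X\setminus U$. Because $U$ is non-empty, $F$ is a proper closed subset of $X$, so by irreducibility $f(F)\neq Y$; because $f$ is closed, $V:=Y\setminus f(F)$ is a non-empty open subset of $Y$. The next step is to check the inclusion $V\subset f(U)$: invoking the paper's blanket convention that all maps are onto, any $y\in V$ lifts to some $x\in X$ with $f(x)=y$, and $y\notin f(F)$ forces $x\notin F$, hence $x\in U$ and $y\in f(U)$. Thus $V\subset \mathrm{int}(f(U))$, so $\mathrm{int}(f(U))$ is non-empty and $f$ is quasi-open.

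There is essentially no obstacle here; the implication ``closed $+$ irreducible $+$ onto $\Rightarrow$ quasi-open'' is a standard folklore fact, so the corollary is a one-line invocation of Theorem~\ref{t1} once this observation is made. The only point worth flagging is the essential use of surjectivity (built into the paper's global hypothesis on maps) in the step $V\subset f(U)$, since without onto-ness a point of $V$ need not have any preimage, and the inclusion could fail.
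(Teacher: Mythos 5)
Your proposal is correct and follows exactly the paper's route: reduce to Theorem~\ref{t1} via the fact that closed irreducible (onto) maps are quasi-open. The only difference is that you supply the short proof of that fact, whereas the paper simply cites it from the literature; your verification of it is accurate, including the correct emphasis on surjectivity.
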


\begin{proof}
Since closed and irreducible maps are quasi-open maps\cite{GG}, closed and irreducible maps preserve $s$-$m_{1}$ property by Theorem~\ref{t1}.
\end{proof}

Next, we shall give an example to show that there exists an $m_{1}$-space $X$ which is non-$s$-$m_{1}$. Firstly, we prove the following Theorem~\ref{t2}.

Let $A$ be a subset of a space $X$. We call a family
$\mathcal{N}$ of open subsets of $X$ is an {\it outer base} of $A$ in
$X$ if for any $x\in A$ and open subset $U$ with $x\in U$ there is a
$V\in \mathcal{N}$ such that $x\in V\subset U$.

\begin{theorem}\label{t2}
If $X$ is Eberlein compact then $X$ is an $s$-$\sigma$-$m_{1}$-space.
\end{theorem}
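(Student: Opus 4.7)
The plan is to split the argument into a reduction step and a pointwise construction step. First I would invoke the classical Benyamini--Rudin--Wage theorem that continuous surjective images of Eberlein compact spaces are again Eberlein compact. In particular, for any closed $A\subset X$ the quotient $X/A$ is Eberlein compact, so the class of Eberlein compacta satisfies hypothesis (b) of Lemma~\ref{l1}. A $\sigma$-analogue of Lemma~\ref{l1} (identical proof, with $m_{i}$ replaced by $\sigma$-$m_{i}$ throughout) then reduces the theorem to the pointwise statement that every Eberlein compact space is $\sigma$-$m_{1}$ at each of its points.

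For the pointwise step I would use Rosenthal's characterization: a compact Hausdorff $Y$ is Eberlein compact if and only if it admits a $T_{0}$-separating $\sigma$-point-finite family $\mathcal{U}=\bigcup_{n\in\mathbb{N}}\mathcal{U}_{n}$ of cozero (equivalently, open $F_{\sigma}$) sets. Fix $y\in Y$ and for each $n$ set $\mathcal{U}_{n}(y)=\{U\in\mathcal{U}_{n}:y\in U\}$, which is finite by point-finiteness. I would form the $n$th layer $\mathcal{B}_{n}$ of the prospective local base from sets of the shape $\bigl(\bigcap\mathcal{F}\bigr)\setminus\overline{\bigcup\mathcal{G}}$, with $\mathcal{F}\subset\mathcal{U}_{n}(y)$ and $\mathcal{G}\subset\mathcal{U}_{n}\setminus\mathcal{U}_{n}(y)$ finite and $y\notin\overline{\bigcup\mathcal{G}}$. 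Inside a fixed $\mathcal{B}_{n}$, closure-preservation reduces to the finite combinatorics of $\mathcal{U}_{n}(y)$, so $\bigcup_{n}\mathcal{B}_{n}$ is $\sigma$-closure-preserving by construction.

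The main obstacle is verifying the base property: given an open $V\ni y$, produce such $\mathcal{F}$ and $\mathcal{G}$ with $\bigl(\bigcap\mathcal{F}\bigr)\setminus\overline{\bigcup\mathcal{G}}\subset V$. The subtlety is that the $T_{0}$-separating property of $\mathcal{U}$ distinguishes $y$ from each $z\in Y\setminus V$ by some $W\in\mathcal{U}$, but the separation may have $z\in W\not\ni y$ rather than $y\in W\not\ni z$; moreover, even when $y\notin W$ one still needs $y\notin\overline{W}$ to legitimately subtract $\overline{W}$ from the intersection. Overcoming this requires a compactness argument on $Y\setminus V$, combined with the cozero ($F_{\sigma}$) structure of $\mathcal{U}$ and regularity of $Y$, to extract a finite $\mathcal{G}\subset\mathcal{U}_{n}$ whose closures still miss $y$ and whose union covers the troublesome points. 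This interplay between $\sigma$-point-finiteness, compactness, and the $F_{\sigma}$ nature of the cozero sets is where the Eberlein hypothesis does its essential work, and is the crux of the argument.
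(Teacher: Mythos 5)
There is a genuine gap, and it sits exactly where you locate ``the crux.'' Your first step (quotients $X/A$ of Eberlein compacta are Eberlein compact by Benyamini--Rudin--Wage, plus a $\sigma$-version of Lemma~\ref{l1}) is sound and would reduce the theorem to the pointwise statement that Eberlein compacta are $\sigma$-$m_{1}$. But you then undertake to prove that pointwise statement from scratch via Rosenthal's characterization, and that proof is not completed. Two specific problems: (i) the claim that closure-preservation of each layer $\mathcal{B}_{n}$ ``reduces to the finite combinatorics of $\mathcal{U}_{n}(y)$'' is unjustified, because while there are only finitely many choices of $\mathcal{F}$, the set $\mathcal{G}$ ranges over all finite subfamilies of the (generally infinite) family $\mathcal{U}_{n}\setminus\mathcal{U}_{n}(y)$, so $\mathcal{B}_{n}$ is an infinite family of sets of the form $C\setminus\overline{\bigcup\mathcal{G}}$; such families are not closure-preserving in general (already $\{(0,\alpha):\alpha\in(0,1)\}$, written as $(0,1)\setminus[\alpha,1]$, fails), and point-finiteness of $\mathcal{U}_{n}$ controls how many members a point lies \emph{in}, not how many closures it lies in, which is what closure-preservation requires. (ii) The verification that the $\mathcal{B}_{n}$ actually form a neighborhood base at $y$ --- the issue that the $T_{0}$-separation may give $z\in W\not\ni y$ with $y\in\overline{W}$ --- is explicitly deferred as ``the main obstacle'' and only described, not carried out. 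Since both the closure-preservation and the base property are the entire content of the pointwise theorem, the proposal as written does not constitute a proof.

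For comparison, the paper avoids all of this: it quotes \cite[Theorem 3.13]{DA}, which already asserts that every closed $F$ in an Eberlein compact space has a $\sigma$-closure-preserving \emph{outer} base $\bigcup_{n}\mathscr{B}_{n}$, and then uses only the compactness of $F$ to pass to the family of finite unions of members of $\mathscr{B}_{n}$ covering $F$; finite unions taken from a closure-preserving family again form a closure-preserving family, so this yields a $\sigma$-closure-preserving local base at $F$ directly, with no quotient construction and no appeal to Rosenthal's characterization. If you want to salvage your route, either cite the pointwise fact that Eberlein compacta are $\sigma$-$m_{1}$ (it follows from the same \cite[Theorem 3.13]{DA}) and keep your quotient reduction, or switch to the outer-base-plus-finite-unions argument.
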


\begin{proof}
Let $F$ be any closed subset of $X$. Since $X$ is Eberlein compact, it follows from \cite[Theorem 3.13]{DA} that $F$ has a $\sigma$-closure-preserving outer base $\mathscr{B}=\bigcup_{n\in \mathbb{N}}\mathscr{B}_{n}$ in $X$, where, for each $n\in \mathbb{N}$, $\mathscr{B}_{n}$ is closure-preserving and $\mathscr{B}_{n}\subset \mathscr{B}_{n+1}.$ For each $n\in \mathbb{N}$, let $$\mathscr{P}_{n}=\{\bigcup\mathscr{B}^{\prime}: \mathscr{B}^{\prime}\ \mbox{is a finite subfamily of}\ \mathscr{B}_{n}\ \mbox{and}\ F\subset\bigcup\mathscr{B}^{\prime}\}.$$It is easy to see that $\mathscr{P}=\bigcup_{n\in \mathbb{N}}\mathscr{P}_{n}$ is a $\sigma$-closure-preserving local base at the set $F$ in $X$, where, for each $n\in \mathbb{N}$, $\mathscr{P}_{n}$ is closure-preserving.
\end{proof}

Recalled that a closed map $f: X\rightarrow Y$ which is {\it perfect} if, for each $y\in Y$, $f^{-1}(y)$ is compact.

\begin{example}\label{e0}
There exists an $m_{1}$-space $X$ such that the following conditions are satisfied:
\begin{enumerate}
\item $X$ is an $s$-$\sigma$-$m_{1}$-space, and non-$s$-$m_{1}$-space;

\item The image of $X$ under some perfect and irreducible map is not an $m_{1}$-space.
\end{enumerate}
\end{example}

\begin{proof}
Let $X$ be the Alexandroff double $D$ of the Cantor set $C$. Then $X$ is first countable Elberlein compact space \cite{DA}. Hence $X$ is $s$-$\sigma$-$m_{1}$ by Theorem~\ref{t2}. Let $f: X\rightarrow Y$ be the quotient map by identifying the non-isolated point of $X$ to one point. Then $f$ is an irreducible and perfect map. However, $Y$ is not an $m_{1}$-space by \cite[Corollary 3.18]{DA}, and hence $X$ is a not an $s$-$m_{1}$-space by Corollary~\ref{c0}. Moreover, it is easy to see that first-countable spaces are $m_{1}$. However, $X$ is non-$s$-$m_{1}$-space. Therefore, compact first-countable is not need to be an $s$-$m_{1}$-space.
\end{proof}

In \cite{DA}, the authors prove that each $GO$ space is $m_{1}$. However, we don't know whether each $GO$ space is $s$-$m_{1}$, and so we have the following question.

\begin{question}
Let $X$ be a $GO$ space. Is $X$ $s$-$m_{1}$?
\end{question}

\bigskip

\section{$s$-$m_{2}$ and $s$-$\sigma$-$m_{2}$ spaces}
Since closed maps preserve a closure-preserving family, we have the following theorem.

\begin{theorem}
Closed maps preserve $s$-$m_{2}$ and $s$-$\sigma$-$m_{2}$ spaces, respectively.
\end{theorem}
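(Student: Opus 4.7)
The plan is to lift the problem to $X$. Fix a closed $F\subset Y$; then $f^{-1}(F)$ is closed in $X$ by continuity, so the $s$-$m_{2}$ hypothesis supplies a closure-preserving local quasi-base $\mathscr{B}$ at $f^{-1}(F)$ in $X$. I would then take $\varphi=\{f(B):B\in\mathscr{B}\}$ as the candidate closure-preserving local quasi-base at $F$ in $Y$, and verify the two defining properties in turn.

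For closure-preservation, the remark preceding the theorem gives the tool directly: since $f$ is continuous and closed one has $\overline{f(A)}=f(\overline{A})$ for every $A\subset X$, and combined with $\bigcup f(B)=f(\bigcup B)$ this shows that any closure-preserving family in $X$ has closure-preserving image in $Y$. The same statement applied level-by-level handles the $\sigma$-version, so this takes care of both the $s$-$m_{2}$ and the $s$-$\sigma$-$m_{2}$ assertions uniformly.

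The substantive step is checking that $\varphi$ is a local quasi-base at $F$. Given an open $U\supset F$, the preimage $f^{-1}(U)$ is an open neighborhood of $f^{-1}(F)$, so we can choose $B\in\mathscr{B}$ with $f^{-1}(F)\subset\mbox{int}(B)\subset B\subset f^{-1}(U)$; then $f(B)\subset U$ follows immediately. The only delicate point, and where I expect the main (mild) obstacle, is to produce $F\subset\mbox{int}(f(B))$. For this I would set $V:=Y\setminus f(X\setminus\mbox{int}(B))$: this is open because $f$ is closed, it contains $F$ because $f^{-1}(y)\subset f^{-1}(F)\subset\mbox{int}(B)$ for each $y\in F$ (using surjectivity), and it lies in $f(B)$ because any $y\in V$ satisfies $f^{-1}(y)\subset\mbox{int}(B)\subset B$. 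Hence $F\subset V\subset\mbox{int}(f(B))\subset f(B)\subset U$, as required. Regularity of $Y$ guarantees $F=\bigcap\varphi$ automatically, by separating $F$ from any point outside it with a neighborhood of $F$ and then shrinking.
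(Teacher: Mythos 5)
Your proof is correct and follows exactly the route the paper intends: the paper omits the argument, remarking only that closed maps preserve closure-preserving families, and your construction $\varphi=\{f(B):B\in\mathscr{B}\}$ together with the standard trick $V=Y\setminus f(X\setminus\mathrm{int}(B))$ supplies precisely the missing verification that the image family is a quasi-base at $F$. No gaps; the level-by-level remark correctly handles the $\sigma$-$m_{2}$ case as well.
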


\begin{theorem}
If $X$ is an $s$-$m_{2}$-space and $Y\subset X$ then $Y$ is $s$-$m_{2}$.
\end{theorem}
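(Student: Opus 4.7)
I would use the following direct strategy: let $F$ be a closed subset of $Y$, set $\widehat{F} := \overline{F}^{X}$ (closed in $X$), and by the $s$-$m_{2}$ property of $X$ take a closure-preserving local quasi-base $\mathcal{B}$ at $\widehat{F}$ in $X$. The candidate for a closure-preserving local quasi-base at $F$ in $Y$ is
\[
\mathcal{B}_{Y} := \{B \cap Y : B \in \mathcal{B}\}.
\]

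The bookkeeping is immediate: $\bigcap \mathcal{B}_{Y} = Y \cap \bigcap \mathcal{B} = Y \cap \widehat{F} = F$ because $F$ is closed in $Y$, and each $B \cap Y$ is a $Y$-quasi-neighborhood of $F$ since $F \subset \widehat{F} \subset \mbox{int}_{X}(B)$ together with $\mbox{int}_{X}(B) \cap Y \subset \mbox{int}_{Y}(B \cap Y)$ yields $F \subset \mbox{int}_{Y}(B \cap Y) \subset B \cap Y$. For the quasi-base property at $F$ in $Y$, given a $Y$-open $U \supset F$ I would write $U = U^{*} \cap Y$ with $U^{*}$ open in $X$, produce an $X$-open $V$ with $\widehat{F} \subset V$ and $V \cap Y \subset U$ using the regularity of $X$, and then pick $B \in \mathcal{B}$ with $B \subset V$ so that $B \cap Y \subset U$. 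For closure-preservation of $\mathcal{B}_{Y}$ in $Y$, I would use the identity $\overline{A}^{Y} = Y \cap \overline{A}^{X}$ for $A \subset Y$ to transport the computation back to $X$ and then invoke closure-preservation of $\mathcal{B}$ there.

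The principal obstacle is the construction of $V$: when $\widehat{F}$ meets $\overline{Y \setminus U}^{X}$ (necessarily at points of $X \setminus Y$), every $X$-open neighborhood of $\widehat{F}$ intersects $Y \setminus U$, so no naive $X$-extension of $U$ containing $\widehat{F}$ exists. The remedy exploits the flexibility of non-open quasi-base elements: one may replace $B \cap Y$ by the $Y$-subset $(B \cap Y) \cap U^{*}$, which is still a $Y$-quasi-neighborhood of $F$ contained in $U$. Ensuring that closure-preservation of the enlarged trace family survives simultaneously across all such refinements is where the bulk of the argument lies.
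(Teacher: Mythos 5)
The paper gives no proof of this theorem (it is stated without argument), so there is nothing to compare against line by line; judged on its own terms, your proposal is not a proof but an accurate map of where the difficulties lie, and the two difficulties you meet remain unresolved. First, the quasi-base property: you correctly observe that when $\overline{F}^{X}$ meets $\overline{Y\setminus U}^{X}$ there is no $X$-open $V\supset\overline{F}^{X}$ with $V\cap Y\subset U$, so the plain trace family $\mathcal{B}_{Y}$ need not refine all $Y$-neighborhoods of $F$. Your remedy --- passing to $(B\cap Y)\cap U^{*}$ --- replaces the fixed family $\mathcal{B}_{Y}$ by a family indexed by all pairs $(B,U^{*})$, i.e.\ essentially by arbitrary relatively open subsets of the traces, and you give no argument that this enlarged family is closure-preserving; you explicitly defer ``the bulk of the argument'' to this point. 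That deferred step is the whole theorem.

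Second, even closure-preservation of the unmodified trace family $\{B\cap Y: B\in\mathcal{B}\}$ does not follow by ``transporting the computation back to $X$'': closure-preservation of $\mathcal{B}$ only gives $\overline{\bigcup(B\cap Y)}^{X}\subset\bigcup\overline{B}^{X}$, which bounds the relative closure by $\bigcup(\overline{B}^{X}\cap Y)$, not by the required $\bigcup\overline{B\cap Y}^{Y}$. Traces of closure-preserving families genuinely need not be closure-preserving: take $X=\{p\}\cup\{b_{n}: n\in\mathbb{N}\}\cup\{a_{n,m}: n,m\in\mathbb{N}\}\subset\mathbb{R}^{2}$ with $p=(0,0)$, $b_{n}=(1/n,0)$, $a_{n,m}=(1/n,1/m)$, and set $B_{n}=\{b_{m}: m\geq n\}\cup\{a_{n,m}: m\in\mathbb{N}\}$; then $\{B_{n}: n\in\mathbb{N}\}$ is closure-preserving in $X$, but on $Y=\{p\}\cup\{a_{n,m}: n,m\in\mathbb{N}\}$ each trace $B_{n}\cap Y$ is closed in $Y$ and omits $p$ from its closure, while $p\in\overline{\bigcup_{n}(B_{n}\cap Y)}^{Y}$ because $a_{n,n}\to p$. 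The standard repair is to first replace the quasi-base members by their closures (traces of a closure-preserving family of closed sets do behave well), but keeping a quasi-base at the closed set $\overline{F}^{X}$ after this replacement requires separating $\overline{F}^{X}$ from $X\setminus U$ by a closed neighborhood, which regularity alone does not supply for non-compact closed sets. Both gaps must be filled before this counts as a proof.
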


A space $X$ is {\it monotonically normal} if there is a function $G$ which assigns to each closed ordered pair $(H, K)$ of disjoint closed subsets of $X$ an open subset $G(H, K)\subset X$ such that:

(1) $H\subset G(H, K)\subset \overline{G(H, K)}\subset X\setminus K$;

(2) $G(H, K)\subset G(H^{\prime}, K^{\prime})$ for disjoint closed subsets $H^{\prime}$ and $K^{\prime}$ with $H\subset H^{\prime}$ and $K\supset K^{\prime}$;

(3) $G(H, K)\cap G(K, H)=\emptyset$;

Moreover, if $X$ also satisfies the following condition

(4) if $H^{\prime}\subset G(H, K)$ with $H^{\prime}$ closed in $X$, then $G(H^{\prime}, K)\subset G(H, K)$. \\ Then $X$ is called {\it strongly monotonically normal} \cite{HR, HR1}.

In \cite{DA}, the authors pose the following question.

\begin{question}\cite{DA}\label{q0}
Must every monotonically normal space be an $m_{1}$-space?
\end{question}

Next, we shall give some partial answer for this Question~\ref{q0}, see Theorem~\ref{t5}.

\begin{theorem}\label{t5}
Let $X$ be a strongly monotonically normal space. Then $X$ is an $s$-$m_{2}$-space.
\end{theorem}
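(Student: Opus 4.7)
The plan is to exhibit, for each closed subset $F \subseteq X$, a closure-preserving local quasi-base at $F$. Fix a strong monotone normality operator $G$ on $X$ satisfying (1)--(4). The natural candidate is
\[
\mathcal{Q}_F := \bigl\{\,\overline{G(F, K)} : K \subseteq X \setminus F \text{ closed}\,\bigr\},
\]
whose elements are closed and satisfy $F \subseteq G(F, K) \subseteq \operatorname{int}(\overline{G(F, K)}) \subseteq \overline{G(F, K)} \subseteq X \setminus K$ by (1). For any open $U \supseteq F$, the choice $K := X \setminus U$ places an element of $\mathcal{Q}_F$ between $F$ and $U$ in the required sandwich form, and $\bigcap \mathcal{Q}_F = F$ since $x \notin \overline{G(F, \{x\})}$ whenever $x \notin F$. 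Thus $\mathcal{Q}_F$ is already a local quasi-base at $F$.

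The substantive work is verifying that $\mathcal{Q}_F$ (or a suitable cofinal subfamily) is closure-preserving. Since the members are closed, this reduces to showing that for any subfamily $\{K_\alpha\}$ and any $y \notin \bigcup_\alpha \overline{G(F, K_\alpha)}$, there is a single open $W \ni y$ with $W \cap G(F, K_\alpha) = \emptyset$ for every $\alpha$. Necessarily $y \in X \setminus F$ (else $y \in G(F, K_\alpha) \subseteq \overline{G(F, K_\alpha)}$ for every $\alpha$), and the natural candidate is $W := G(\{y\}, F)$, which by (1) is open, contains $y$, and has $\overline{W} \cap F = \emptyset$.

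The main obstacle is verifying $W \cap G(F, K_\alpha) = \emptyset$ uniformly in $\alpha$. Applying (3) to the disjoint pair $(F, K_\alpha \cup \{y\})$ and combining with the inclusion $G(\{y\}, F) \subseteq G(K_\alpha \cup \{y\}, F)$ supplied by (2) yields only $W \cap G(F, K_\alpha \cup \{y\}) = \emptyset$; unfortunately (2) gives the inclusion $G(F, K_\alpha \cup \{y\}) \subseteq G(F, K_\alpha)$ in the wrong direction. It is precisely here that the strengthened axiom (4) is essential. The key consequence of (4) (combined with (2)) is the rigidity $G(F \cup \{z\}, K_\alpha) = G(F, K_\alpha)$ for every $z \in G(F, K_\alpha)$, and likewise $G(\{y, z\}, F) = G(\{y\}, F) = W$ whenever $z \in W$. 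The plan is to derive a contradiction from a hypothetical $z \in W \cap G(F, K_\alpha)$ by combining these two rigidity equalities with (3) applied to the enlarged disjoint pair $(F \cup \{z\}, K_\alpha \cup \{y\})$, thereby forcing $y \in \overline{G(F, K_\alpha)}$ against the hypothesis. Should the full family $\mathcal{Q}_F$ prove inadequate, the argument can be adapted by restricting to a cofinal subfamily indexed along a transfinite chain of open neighborhoods of $F$, where the decreasing chain $\{G(F, X \setminus U_\alpha)\}$ is automatically closure-preserving; the existence of such a chain is produced by a recursive construction using $G$ itself. This final step, transferring the local disjointness provided by (3) to a uniform disjointness using (4), is the delicate heart of the proof.
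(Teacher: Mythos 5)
There is a genuine gap, and it sits exactly where you locate the ``delicate heart'' of the argument. The setup is fine: $\mathcal{Q}_F=\{\overline{G(F,K)}:K\subset X\setminus F\ \mbox{closed}\}$ is a local quasi-base at $F$, and closure-preservation does reduce to producing a single open $W\ni y$ missing every $G(F,K_\alpha)$. But the claim that $W=G(\{y\},F)$ does the job is never proved, and the plan you sketch for it does not close. Applying (3) to the enlarged pair $(F\cup\{z\},K_\alpha\cup\{y\})$ together with the two rigidity equalities from (4) yields only that the disjoint sets $G(F\cup\{z\},K_\alpha\cup\{y\})$ and $G(K_\alpha\cup\{y\},F\cup\{z\})$ satisfy: the first contains $z$ and is contained in $G(F,K_\alpha)$, the second contains $y$. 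These facts are mutually consistent; they produce no contradiction and give no mechanism for showing that every neighborhood of $y$ meets $G(F,K_\alpha)$. What you are attempting here is precisely Buck's Theorem 3.13 (strongly monotonically normal $\Rightarrow m_2$), upgraded from points to closed sets --- that is the entire content of the theorem, and it cannot be left as a plan. The fallback is worse than the main line: a neighborhood base of $F$ well-ordered by reverse inclusion would, by Proposition~\ref{p1}, make $X$ $s$-$m_1$ at $F$ (thereby also answering the paper's open question of whether strongly monotonically normal spaces are $s$-$m_1$), and such linearly ordered outer bases need not exist for general closed sets even in $\mathbb{R}$; no recursion with $G$ can manufacture one.

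The paper's own proof is structurally different and sidesteps the construction entirely. It applies Lemma~\ref{l1}: a property that implies $m_i$ and is inherited by the quotients $X/A$ over closed sets $A$ automatically yields $s$-$m_i$. Since collapsing a closed set is a closed map, the Buck--Heath--Zenor theorem that closed images of strongly monotonically normal spaces are strongly monotonically normal shows $X/A$ is strongly monotonically normal, and Buck's Theorem 3.13 then makes $X/A$ an $m_2$-space; the closure-preserving quasi-base at the point $A$ of $X/A$ pulls back to one at $A$ in $X$. If you insist on a self-contained argument you must in effect reprove Buck's theorem; your sketch correctly isolates the rigidity consequences of axiom (4) that such a proof would use, but it does not assemble them into a proof.
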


\begin{proof}
Let $A$ be a closed subspace of $X$. In \cite{BR1}, R.E. Buck, R.W. Heath and P.L. Zenora showed that a closed image of a strongly monotonically normal space is again strongly monotonically normal, and hence $X/A$ is strongly monotonically normal. By \cite[Theorem 3.13]{BR}, a strongly monotonically normal space is $m_{2}$, and thus $X$ is an $s$-$m_{2}$-space by Lemma~\ref{l1}.
\end{proof}

\begin{question}
Let $X$ be a strongly monotonically normal space. Is $X$ an $s$-$m_{1}$-space or an $m_{1}$-space?
\end{question}

\bigskip

\section{$s$-$m_{3}$ and $\sigma$-$m_{3}$ spaces}
The proofs of the following two theorems are obvious, and so we omit them.

\begin{theorem}
Closed maps preserve $s$-$m_{3}$ and $s$-$\sigma$-$m_{3}$-spaces, respectively.
\end{theorem}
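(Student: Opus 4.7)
The plan is to mimic the closed-map argument used for $s$-$m_1$ (Theorem~\ref{t1}), adapted to pairbases. Fix a continuous closed surjection $f\colon X\to Y$ with $X$ an $s$-$m_3$-space, and let $F\subset Y$ be closed. Then $f^{-1}(F)$ is closed in $X$, so by hypothesis it has a cushioned local pairbase $\mathcal{P}$. For each open $U\subset X$ define the ``small image'' $\varphi(U)=Y\setminus f(X\setminus U)=\{y\in Y:f^{-1}(y)\subset U\}$, which is open in $Y$ because $f$ is closed. Then I would propose
\[
\mathcal{Q}=\bigl\{\,(\varphi(P'),\,f(P''))\,:\,(P',P'')\in\mathcal{P}\bigr\}
\]
as a cushioned local pairbase for $F$ in $Y$.

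The verification splits into three routine checks. First, $\varphi(P')\subset f(P'')$: any $y\in\varphi(P')$ satisfies $\emptyset\neq f^{-1}(y)\subset P'\subset P''$, so $y\in f(P'')$. Second, $\mathcal{Q}$ is a local pairbase at $F$: given an open $V\supset F$ in $Y$, $f^{-1}(V)$ is open in $X$ and contains $f^{-1}(F)$, so some $(P',P'')\in\mathcal{P}$ satisfies $f^{-1}(F)\subset P'\subset P''\subset f^{-1}(V)$; surjectivity of $f$ gives $F\subset\varphi(P')\subset f(P'')\subset V$.

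The main step is cushioning. For $\mathcal{Q}'\subset\mathcal{Q}$ arising from $\mathcal{P}'\subset\mathcal{P}$, I would chain the inclusions
\[
\overline{\bigcup_{(P',P'')\in\mathcal{P}'}\varphi(P')}\;\subset\;\overline{\bigcup_{(P',P'')\in\mathcal{P}'}f(P')}\;=\;\overline{f\!\left(\bigcup_{(P',P'')\in\mathcal{P}'}P'\right)}\;=\;f\!\left(\overline{\bigcup_{(P',P'')\in\mathcal{P}'}P'}\right)\;\subset\;\bigcup_{(P',P'')\in\mathcal{P}'}f(P''),
\]
where the middle equality uses the standard fact that for a continuous closed surjection $\overline{f(A)}=f(\overline{A})$, and the last inclusion uses the cushioning of $\mathcal{P}$. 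The potential obstacle is precisely this interchange: one must check that $f$ closed and continuous really does give $f(\overline{A})=\overline{f(A)}$, which I would cite as a standard fact (the forward inclusion is continuity, the reverse is that $f(\overline{A})$ is closed and contains $f(A)$).

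For the $s$-$\sigma$-$m_3$ case, if $\mathcal{P}=\bigcup_{n\in\mathbb{N}}\mathcal{P}_n$ with each $\mathcal{P}_n$ cushioned, then the same argument applied to each level produces $\mathcal{Q}=\bigcup_{n\in\mathbb{N}}\mathcal{Q}_n$ with each $\mathcal{Q}_n$ cushioned, giving a $\sigma$-cushioned local pairbase at $F$. Since $F\subset Y$ was an arbitrary closed set, $Y$ is $s$-$m_3$ (respectively $s$-$\sigma$-$m_3$).
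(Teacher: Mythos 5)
Your proof is correct: the paper omits this proof as ``obvious,'' and your argument is exactly the natural one it has in mind, transferring the pairbase via the small image $\varphi(P')=Y\setminus f(X\setminus P')$ and the image $f(P'')$, in parallel with the closure-preserving case of Theorem~\ref{t1}. All three verifications (openness of $\varphi(P')$, the local pairbase property at $F$, and the cushioning via $f(\overline{A})=\overline{f(A)}$ for closed continuous surjections) are sound, and the level-by-level treatment handles the $\sigma$ case correctly.
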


\begin{theorem}
If $X$ is an $s$-$m_{3}$-space and $Y\subset X$ then $Y$ is $s$-$m_{3}$.
\end{theorem}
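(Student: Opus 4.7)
The plan is to mirror the standard trace argument that shows a $\sigma$-cushioned pairbase is hereditary, but applied locally at closed subsets. Take any closed subset $F$ of $Y$ and let $\overline{F}^X$ be its closure in $X$; this is closed in $X$, so by the hypothesis that $X$ is $s$-$m_3$ there is a cushioned local pairbase $\mathcal{P}=\{(P'_\alpha,P''_\alpha):\alpha\in A\}$ at $\overline{F}^X$ in $X$. The natural candidate for a cushioned local pairbase at $F$ in $Y$ is the trace family
\[
\mathcal{P}_Y=\{(P'_\alpha\cap Y,\;P''_\alpha\cap Y):\alpha\in A\},
\]
and I would prove that $\mathcal{P}_Y$ witnesses $Y$'s being $s$-$m_3$ at $F$.

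The cushioned check transfers directly: for any $A'\subset A$, using $\overline{B}^Y=\overline{B}^X\cap Y$ for $B\subset Y$, one has
\[
\overline{\textstyle\bigcup_{\alpha\in A'}(P'_\alpha\cap Y)}^{\,Y}\subset\overline{\textstyle\bigcup_{\alpha\in A'}P'_\alpha}^{\,X}\cap Y\subset\Bigl(\textstyle\bigcup_{\alpha\in A'}P''_\alpha\Bigr)\cap Y=\textstyle\bigcup_{\alpha\in A'}(P''_\alpha\cap Y),
\]
where the middle step uses that $\mathcal{P}$ is cushioned in $X$. For the local pairbase property, given $U$ open in $Y$ with $F\subset U$, write $U=V\cap Y$ with $V$ open in $X$. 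The identity $F=\overline{F}^X\cap Y$ (since $F$ is closed in $Y$) forces $\overline{F}^X\setminus V\subset X\setminus Y$, so I would construct an open set $W$ of $X$ with $\overline{F}^X\subset W$ and $W\cap Y\subset V$; applying the local pairbase property of $\mathcal{P}$ at $\overline{F}^X$ to $W$ then yields $(P'_\alpha,P''_\alpha)\in\mathcal{P}$ with $\overline{F}^X\subset P'_\alpha\subset P''_\alpha\subset W$, hence $F\subset P'_\alpha\cap Y\subset P''_\alpha\cap Y\subset U$ in $\mathcal{P}_Y$.

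The main obstacle is the construction of this auxiliary $W$: one must enlarge $V$ by an open neighborhood of $\overline{F}^X\setminus V$ that does not reintroduce points of $Y\setminus V$. In the case where $Y$ is closed in $X$ this is immediate, by taking $W=V\cup(X\setminus Y)$, which is open since $X\setminus Y$ is open, satisfies $W\cap Y=V\cap Y=U$, and contains $\overline{F}^X$ (its points outside $V$ lie outside $Y$). For general $Y$ a short additional argument using regularity of $X$ together with the disjointness of $\overline{F}^X\setminus V$ from $Y$ is needed; this is presumably the routine verification the authors had in mind when declaring the proof obvious.
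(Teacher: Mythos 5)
The paper gives no proof of this theorem (it is declared obvious and omitted), so the only question is whether your argument is sound, and it is not: the step you yourself flag as ``the main obstacle'' is a genuine gap, not a routine verification. You need an open $W\subset X$ with $\overline{F}^{X}\subset W$ and $W\cap Y\subset V$; since an open set disjoint from $Y\setminus V$ is disjoint from its closure, this forces $\overline{F}^{X}\cap\overline{Y\setminus V}^{X}=\emptyset$, and that intersection can be non-empty. Take $X=\{0\}\cup\{1/n:n\in\mathbb{N}\}\subset\mathbb{R}$, $Y=\{1/n:n\in\mathbb{N}\}$ (open, dense and discrete in $X$), $F=\{1/(2n):n\in\mathbb{N}\}$ (closed in $Y$), and $U=V=F$ (open in $X$). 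Then $\overline{F}^{X}=F\cup\{0\}$ and $0\in\overline{Y\setminus V}^{X}$, so every open $W\supset\overline{F}^{X}$ contains a tail of the sequence and hence meets $Y\setminus V$ in infinitely many points; no such $W$ exists, and regularity of $X$ cannot rescue the construction because the two sets you would need to separate actually intersect. Worse, the defect is not merely in the construction of $W$ but in the whole strategy: in this example every pair $(P',P'')$ of any local pairbase at $\overline{F}^{X}$ has $0\in P'$, hence $P'\cap Y\not\subset U$, so the trace family $\mathcal{P}_Y$ is simply not a local pairbase at $F$ in $Y$. (Since $Y$ is discrete it is of course $s$-$m_3$, so this does not refute the theorem; it only shows that the witnessing pairs for $Y$ cannot be obtained by tracing a local pairbase at $\overline{F}^{X}$.)

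What you do establish correctly is the cushioned condition for the trace family and the case where $Y$ is closed in $X$ (via $W=V\cup(X\setminus Y)$), so your argument proves that $s$-$m_3$ passes to closed subspaces. For arbitrary subspaces a genuinely different idea is required. It is worth noting that the paper itself is more cautious about the $s$-$m_1$ variant, where heredity is claimed only for dense subspaces; this is some indication that the general hereditary claim for the $s$-variants is not the formality that the word ``obvious'' suggests, and your write-up should not defer the essential difficulty to ``a short additional argument.''
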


The following theorem is also a partial answer for the Question~\ref{q0}.

\begin{theorem}\label{t4}
Let $X$ be a monotonically normal space. Then $X$ is an $s$-$m_{3}$-space.
\end{theorem}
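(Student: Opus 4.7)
The plan is to apply Lemma~\ref{l1} with the property $\mathcal{P}$ taken to be monotone normality and $i=3$, exactly mirroring the strategy used in Theorems~\ref{t0},~\ref{t3}, and~\ref{t5}. Two ingredients are needed: (a) every monotonically normal space is $m_{3}$; and (b) if $A\subset X$ is closed, then $X/A$ is monotonically normal. Granted these, Lemma~\ref{l1} gives $s$-$m_{3}$ in one line.

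For (a), fix $x\in X$ and for each open $U\ni x$ set $V(x,U)=G(\{x\},X\setminus U)$; by condition~(1) of monotone normality we have $x\in V(x,U)\subset U$, so $\mathcal{P}_{x}=\{(V(x,U),U): U\text{ open},\ x\in U\}$ is a local pairbase at $x$. To verify that it is cushioned, fix a subfamily indexed by $\{U_{\alpha}\}$ and suppose, for contradiction, $y\in\overline{\bigcup_{\alpha}V(x,U_{\alpha})}\setminus\bigcup_{\alpha}U_{\alpha}$. Then $y\neq x$, and for every $\alpha$ condition~(2) yields $G(\{y\},\{x\})\subset G(X\setminus U_{\alpha},\{x\})$, which by condition~(3) is disjoint from $V(x,U_{\alpha})=G(\{x\},X\setminus U_{\alpha})$. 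Hence $G(\{y\},\{x\})$ is an open neighbourhood of $y$ meeting none of the $V(x,U_{\alpha})$, contradicting $y$ lying in their closure.

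For (b), this is the classical fact that closed continuous surjections preserve monotone normality; applied to the quotient map $X\to X/A$ it gives that $X/A$ is monotonically normal, at which point Lemma~\ref{l1} finishes the proof.

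The main obstacle is really only ingredient~(a), namely checking cushioning of the natural candidate pairbase. As the sketch above shows, this is essentially immediate from axioms~(2) and~(3) of monotone normality, with only mild bookkeeping for the case $y=x$ (which is handled automatically since $x$ lies in every $U_{\alpha}$). Ingredient~(b) is a citation and the final step is a one-line application of Lemma~\ref{l1}.
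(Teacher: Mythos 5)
Your proof is correct and takes essentially the same route as the paper: apply Lemma~\ref{l1} using that monotone normality is preserved by closed maps and implies $m_{3}$. The only difference is that you verify explicitly (and correctly, via axioms (1)--(3)) that monotonically normal spaces are $m_{3}$, a fact the paper simply cites as well known.
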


\begin{proof}
Let $A$ be a closed subspace of $X$. It is well known that monotonically normal spaces are preserved by closed images, and implies $m_{3}$. Hence $X$ is $m_{3}$ and $X/A$ is monotonically normal, which follows that $X$ is an $s$-$m_{3}$-space by Lemma~\ref{l1}.
\end{proof}

Let $X$ be a space and $\kappa$ an infinite cardinal. For each $x\in X$, we denote $t(x, X)$ means that for any $A\subset X$ with $x\in \overline{A}$ there exists a set $B\subset A$ such that $|B|\leq \kappa$ and $x\in \overline{B}$; moreover, $t(X)\leq \kappa$ iff $t(x, X)\leq \kappa$ for each $x\in X$. The space $X$ with $t(X)\leq \kappa$ are said to have tightness $\leq\kappa$.

A pairwise disjoint collection of non-empty open subsets in $X$ is called a {\it cellular family}. The {\it cellularity of $X$}, defined as follows:$$c(X)=\mbox{sup}\{|\mathcal{U}|: \mathcal{U}\ \mbox{is a cellular family in}\ X\}+\omega.$$

In \cite{CP}, P.M. Gartside proved that for each monotonically normal space $X$, we have $t(X)\leq c(X)$. We shall extend this result of P.M. Gartside, and prove that, for each $\sigma$-$m_{3}$-space $X$, we have $t(X)\leq c(X)$.

\begin{theorem}
Suppose that a space $X$ is $\sigma$-$m_{3}$ at some point $x\in X$ and $\kappa$ is an infinite cardinal such that $c(U)\leq\kappa$ for some open neighborhood $U$ of the point $x$. Then $t(x, X)\leq\kappa$. In particular, if $X$ is an $\sigma$-$m_{3}$-space, then $t(X)\leq c(X)$.
\end{theorem}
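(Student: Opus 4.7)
The plan is a contradiction argument. Suppose $t(x,X)>\kappa$: pick $A\subset X$ with $x\in\overline A$ but $x\notin\overline B$ for every $B\subset A$ with $|B|\le\kappa$. I will produce a cellular family inside $U$ of size $\kappa^+$, contradicting $c(U)\le\kappa$. Let $\mathcal P=\bigcup_{n\in\mathbb N}\mathcal P_n$ be a $\sigma$-cushioned local pairbase at $x$, each $\mathcal P_n$ cushioned. By restricting to pairs $(P',P'')$ with $P''\subset U$ (which preserves the local pairbase property at $x$, since any open $W\ni x$ can be refined by $W\cap U$, and which preserves cushionedness, an obvious hereditary property of pair-families), I may assume $P''\subset U$ throughout.

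By transfinite recursion on $\xi<\kappa^+$ I choose $a_\xi\in A$ and $(P'_\xi,P''_\xi)\in\mathcal P_{n_\xi}$ with $x\in P'_\xi\subset P''_\xi\subset V_\xi:=X\setminus\overline{\{a_\eta:\eta<\xi\}}$ and $a_\xi\in P'_\xi\cap A$. At stage $\xi$ we have $|\{a_\eta:\eta<\xi\}|\le\kappa$, so the hypothesis on $A$ makes $V_\xi$ an open neighbourhood of $x$; the local pairbase delivers $(P'_\xi,P''_\xi)$, and $a_\xi$ exists because $P'_\xi\cap\overline A\ni x$. By pigeonhole on $\omega\ni n_\xi$, there is $n$ and $S\subset\kappa^+$ with $|S|=\kappa^+$ and $n_\xi=n$ for all $\xi\in S$; the family $\{(P'_\xi,P''_\xi):\xi\in S\}$ now lies in the single cushioned collection $\mathcal P_n$.

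For $\xi\in S$ define $D_\xi:=P'_\xi\setminus\overline{\bigcup\{P'_\eta:\eta\in S,\ \eta>\xi\}}$, an open subset of $U$. The nonemptiness step is where cushionedness is used: for any $\eta\in S$ with $\eta>\xi$, $a_\xi\in\{a_\gamma:\gamma<\eta\}\subset X\setminus V_\eta\subset X\setminus P''_\eta$, so $a_\xi\notin\bigcup\{P''_\eta:\eta\in S,\eta>\xi\}$; applying cushionedness of $\mathcal P_n$ to the subfamily $\{(P'_\eta,P''_\eta):\eta\in S,\eta>\xi\}$ gives $\overline{\bigcup_{\eta>\xi}P'_\eta}\subset\bigcup_{\eta>\xi}P''_\eta$, hence $a_\xi\notin\overline{\bigcup_{\eta>\xi}P'_\eta}$ and $a_\xi\in D_\xi$. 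Pairwise disjointness is immediate: if $\xi<\eta$ in $S$ then $D_\eta\subset P'_\eta\subset\overline{\bigcup_{\gamma>\xi,\gamma\in S}P'_\gamma}$, which is disjoint from $D_\xi$ by construction.

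Thus $\{D_\xi:\xi\in S\}$ is a cellular family of size $\kappa^+$ in $U$, contradicting $c(U)\le\kappa$, and so $t(x,X)\le\kappa$; the second assertion $t(X)\le c(X)$ follows by applying this to each point with $\kappa=c(X)$ and $U=X$. The main obstacle is really conceptual: the sets $P'_\xi$ are all neighbourhoods of $x$, hence intrinsically non-disjoint, yet they must be pruned to a genuine cellular family. The right trick is to force $P''_\xi\subset V_\xi$ in the recursion so that the one-sided relation $a_\xi\notin P''_\eta$ for $\eta>\xi$ comes for free, after which cushionedness is exactly the lever that converts this one-sided information into the disjointness of the refinement $\{D_\xi\}$.
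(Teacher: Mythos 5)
Your proof is correct, but it takes a genuinely different route from the paper's. The paper argues directly: for each $y\in A$ and $n$ it forms the open neighbourhood $W_{ny}=X\setminus\overline{\bigcup\{P_{1}:P\in\mathscr{P}_{n},\,y\notin P_{2}\}}$ of $y$ (cushionedness is what makes this a neighbourhood), proves a closure criterion (``$x\in\overline{Q}$ iff $x\in\overline{\bigcup_{y\in Q}W_{ny}}$ for all $n$''), and then uses the standard consequence of $c(U)\le\kappa$ that every family of open sets has a $\le\kappa$-sized subfamily with dense union to extract the required $D\subset A$, $|D|\le\kappa$, with $x\in\overline{D}$. You instead argue by contradiction: a transfinite recursion of length $\kappa^{+}$ builds points $a_{\xi}\in A$ together with pairs $(P'_{\xi},P''_{\xi})$ satisfying $P''_{\xi}\cap\{a_{\eta}:\eta<\xi\}=\emptyset$, a pigeonhole (using the regularity of $\kappa^{+}$) confines $\kappa^{+}$ of them to one cushioned level $\mathcal{P}_{n}$, and cushionedness turns the one-sided separation into the pairwise disjointness of the sets $D_{\xi}=P'_{\xi}\setminus\overline{\bigcup_{\eta>\xi}P'_{\eta}}$, giving a cellular family of size $\kappa^{+}$ inside $U$. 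All the steps check out: $V_{\xi}$ is open by the failure of tightness, the $a_{\xi}$ are automatically new since $a_{\xi}\in V_{\xi}$, restriction of the pairbase to pairs with $P''\subset U$ preserves both the local pairbase property and cushionedness, and $a_{\xi}\in D_{\xi}$ witnesses nonemptiness. What each approach buys: the paper's proof is constructive (it exhibits the small set $D$) but leans on the somewhat delicate Claim~1 and on several normalizations of the pairbase; yours is closer in spirit to the classical free-sequence proofs of $t(X)\le c(X)$, needs no normalization beyond confinement to $U$, and isolates the single point where cushionedness is indispensable, at the cost of being a nonconstructive contradiction argument.
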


\begin{proof}
Fix any set $A\subset X\setminus\{x\}$ with $x\in \overline{A}$.
Let $\mathscr{P}=\bigcup_{n\in \mathbb{N}}\mathscr{P}_{n}$ be a $\sigma$-cushioned pairbase at the point $x$,
where $\mathscr{P}_{n}\subset \mathscr{P}_{n+1}$ for each $n\in \mathbb{N}$, and for
each $P\in \mathscr{P}, P_{2}$ is closed in $X$. Without loss of generality, we may assume that $A\subset U$ and $\bigcup_{n\in \mathbb{N}}(\bigcup\mathscr{P}_{n})\subset U$. For
each $y\in X\setminus\{x\}$ and $n\in \mathbb{N}$, put $$W_{ny}=X\setminus\overline{\bigcup\{P_{1}: P\in\mathscr{P}\ \mbox{and}\ y\not\in P_{2}\}}.$$For each $n\in \mathbb{N}$, since $y\in X\setminus\bigcup\{P_{2}: P\in\mathscr{P}\ \mbox{and}\ y\not\in P_{2}\}\subset W_{ny}$, $W_{ny}$ is an open neighborhood of $y$.

Claim 1: If $Q\subset X\setminus\{x\}$, then $x\in \overline{Q}$ if and only if, for each $n\in \mathbb{N}$,
$x\in \overline{\bigcup\{W_{ny}: y\in Q\}}$.

In fact, if $x\in\overline{Q}$, then, for each open neighborhood $V$ of point $x$, we have $V\cap Q\neq\emptyset$. Choose a point $y\in V\cap Q$. It follows that $V\cap W_{ny}\neq\emptyset$ for each $n\in \mathbb{N}$, and hence we have $x\in \overline{\bigcup\{W_{ny}: y\in Q\}}$. For each $n\in \mathbb{N}$, let $x\in \overline{\bigcup\{W_{ny}: y\in Q\}}$. Suppose that $x\not\in \overline{Q}$. Then there exist an $n\in \mathbb{N}$ and $P\in \mathscr{P}_{n}$ such that $x\in P_{1}$ and $P_{2}\cap Q=\emptyset$. For each $y\in Q$, since $y\not\in P_{2}$, we have $W_{ny}\cap P_{1}=\emptyset$, and so $x\not\in\overline{\bigcup\{W_{ny}: y\in Q\}}$, which is a contradiction.

For each $n\in \mathbb{N}$ and $y\in A$, put $G_{ny}=W_{ny}\cap U$. It follows from $c(U)\leq\kappa$ that we can choose a set $D_{n}\subset A$ such that $|D_{n}|\leq\kappa$ and $G_{n}=\bigcup\{G_{ny}: y\in D_{n}\}$ is dense in $H_{n}=\bigcup\{G_{ny}: y\in A\}$. Let $D=\bigcup_{n\in \mathbb{N}}D_{n}$. Then $|D|\leq\kappa$. Obviously, we have that $$x\in\overline{A}\subset \bigcap_{n\in \mathbb{N}}\overline{H_{n}}=\bigcap_{n\in \mathbb{N}}\overline{G_{n}},$$which implies that $x\in\overline{G_{n}}\subset\overline{\bigcup\{W_{ny}: y\in D\}}$ for each $n\in \mathbb{N}$. By the Claim 1, we have $x\in \overline{D}$. Therefore, $t(x, X)\leq\kappa$.
\end{proof}

\begin{corollary}\cite{DA}
If $X$ is an $m_{2}$-space, then $t(X)\leq c(X)$.
\end{corollary}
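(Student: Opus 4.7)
My plan is to apply the preceding theorem after observing that every $m_{2}$-space is a $\sigma$-$m_{3}$-space; the arrow diagram of Section~2 already records $m_{2}\Rightarrow m_{3}\Rightarrow\sigma$-$m_{3}$, and the ``In particular'' clause of the theorem then yields $t(X)\leq c(X)$ at once.

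The only thing I would spell out is the implication $m_{2}\Rightarrow m_{3}$ at a point $x$. Given a closure-preserving local quasi-base $\mathcal{B}$ at $x$, I would pass to the pair-family $\mathcal{P}=\{(\mathrm{int}(B),\overline{B}):B\in\mathcal{B}\}$. For the pairbase condition at $x$, given an open $U\ni x$ I would use regularity to shrink to an open $V$ with $x\in V\subset\overline{V}\subset U$, then apply the quasi-base property of $\mathcal{B}$ inside $V$ to obtain $B$ with $x\in\mathrm{int}(B)\subset B\subset V$, so that $x\in\mathrm{int}(B)\subset\overline{B}\subset U$. For cushioning, closure-preservation of $\mathcal{B}$ gives
\[
\overline{\bigcup\{\mathrm{int}(B):B\in\mathcal{B}'\}}\ \subset\ \overline{\bigcup\mathcal{B}'}\ =\ \bigcup\{\overline{B}:B\in\mathcal{B}'\}
\]
for every $\mathcal{B}'\subset\mathcal{B}$, which is exactly the cushioning inequality for $\mathcal{P}$.

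There is no genuine obstacle here; the only subtle design choice is to take the closure $\overline{B}$ (not $B$ itself) as the second coordinate of the pair, which is legitimate by regularity of $X$ and is what allows closure-preservation of $\mathcal{B}$ to convert into cushioning of $\mathcal{P}$. After that, the previous theorem does all the work of turning the cellularity bound into the tightness bound.
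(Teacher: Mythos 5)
Your proposal is correct and matches the paper's (implicit) argument: the paper offers no separate proof of this corollary, deriving it directly from the implication $m_{2}\Rightarrow m_{3}\Rightarrow\sigma$-$m_{3}$ recorded in the diagram of Section~2 together with the ``In particular'' clause of the preceding theorem. Your explicit verification of $m_{2}\Rightarrow m_{3}$ via the pair-family $\{(\mathrm{int}(B),\overline{B}):B\in\mathcal{B}\}$ is sound and even supplies the closedness of the second coordinates that the theorem's proof assumes without loss of generality.
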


\begin{corollary}\cite[Theorem 10]{CP}
If $X$ is a monotonically normal space, then $t(X)\leq c(X)$.
\end{corollary}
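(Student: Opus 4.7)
The plan is to imitate Gartside's cellularity/tightness argument, adapted to $\sigma$-cushioned pairbases rather than the monotone normality operator. Fix $A \subset X \setminus \{x\}$ with $x \in \overline{A}$, and a $\sigma$-cushioned local pairbase $\mathscr{P} = \bigcup_{n} \mathscr{P}_n$ at $x$ with $\mathscr{P}_n \subset \mathscr{P}_{n+1}$. Shrinking $\mathscr{P}_n$ and $A$ if necessary, I may assume that $A$ and each $\bigcup \mathscr{P}_n$ lies inside $U$, so that everything relevant takes place where $c \leq \kappa$. Throughout I write $P = (P_1, P_2)$ for a pair.

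The main construction is an open neighbourhood $W_{n,y}$ of each point $y \in X \setminus \{x\}$ that encodes, at level $n$, every pair of $\mathscr{P}_n$ that excludes $y$. Explicitly, set
\[
 W_{n,y} \;=\; X \setminus \overline{\bigcup\{P_1 : P \in \mathscr{P}_n,\ y \notin P_2\}}.
\]
The cushioning of $\mathscr{P}_n$ moves the closure inside the union of the $P_2$'s; since $y$ lies in none of those $P_2$'s, $y \in W_{n,y}$, so $W_{n,y}$ is indeed an open neighbourhood of $y$. This is the one place where cushioning is used, and it is the key technical step.

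The heart of the argument is the following equivalence, which replaces Gartside's use of the monotone normality operator: for $Q \subset X \setminus \{x\}$,
\[
 x \in \overline{Q} \;\iff\; x \in \overline{\bigcup\{W_{n,y} : y \in Q\}} \text{ for every } n.
\]
The forward direction is immediate from $y \in W_{n,y}$. For the backward direction, suppose $x \notin \overline{Q}$; then the local pairbase gives some $(P_1, P_2) \in \mathscr{P}_n$ with $x \in P_1 \subset P_2 \subset X \setminus \overline{Q}$. Every $y \in Q$ then misses $P_2$, so $P_1$ is one of the sets whose closure $W_{n,y}$ avoids. Thus $P_1 \cap \bigcup\{W_{n,y} : y \in Q\} = \emptyset$, contradicting $x \in P_1$ and $x \in \overline{\bigcup_y W_{n,y}}$.

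Once the equivalence is in hand, the cellularity hypothesis produces the small witness. For each $n$, inside the open set $U$ (where $c \leq \kappa$) the family $\{W_{n,y} \cap U : y \in A\}$ has a subfamily indexed by some $D_n \subset A$ of size $\leq \kappa$ whose union is dense in $\bigcup\{W_{n,y} \cap U : y \in A\}$ — a standard consequence of $c(U) \leq \kappa$. Set $D = \bigcup_n D_n$, so $|D| \leq \kappa$. Since $A \subset \bigcup\{W_{n,y} : y \in A\}$, we have $x \in \overline{\bigcup\{W_{n,y} : y \in A\}} = \overline{\bigcup\{W_{n,y} : y \in D_n\}}$ by density, for every $n$. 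Applying the equivalence to $Q = D$ yields $x \in \overline{D}$, and hence $t(x,X) \leq \kappa$. The global statement $t(X) \leq c(X)$ follows by taking $U = X$ at every point. The main obstacle to watch out for is being precise about what the cushioning buys us in justifying that $W_{n,y}$ is a neighbourhood of $y$, and in the backward direction of the equivalence, where one must extract a single defeating pair from the pairbase.
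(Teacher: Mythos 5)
Your argument is, in substance, a correct reproof of the theorem immediately preceding this corollary (that $\sigma$-$m_3$ at $x$ together with $c(U)\le\kappa$ gives $t(x,X)\le\kappa$), and it follows the paper's own proof of that theorem almost line by line: the sets $W_{n,y}$, your displayed equivalence is the paper's Claim~1, and the cellularity step producing $D_n$ and $D=\bigcup_n D_n$ is identical. If anything your version is slightly cleaner: you define $W_{n,y}$ from the single cushioned piece $\mathscr{P}_n$, which is exactly what makes $y\in W_{n,y}$ follow from cushioning, whereas the paper's displayed formula takes the union over all of $\mathscr{P}$ (apparently a slip, since only each $\mathscr{P}_n$ is assumed cushioned).

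However, the statement you were asked to prove concerns monotonically normal spaces, and your proof never uses monotone normality: it begins by \emph{fixing} a $\sigma$-cushioned local pairbase at $x$, i.e., it assumes from the outset that $X$ is $\sigma$-$m_3$ at $x$. The missing link is precisely the implication ``monotonically normal $\Rightarrow$ $m_3$ at every point,'' that is, that the monotone normality operator $G$ yields a cushioned local pairbase at each point. This is not a formality --- it is the content of Buck's theorem in \cite{BR}, and in the paper it is supplied by Theorem~\ref{t4} (monotonically normal spaces are $s$-$m_3$, hence $m_3$, hence $\sigma$-$m_3$); the corollary is then an immediate consequence of the preceding theorem. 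As written, your argument establishes the general $\sigma$-$m_3$ tightness bound but does not yet reach the corollary. To close the gap, either invoke Theorem~\ref{t4} (or \cite{BR}) for the reduction, or exhibit the cushioned local pairbase at $x$ directly from the operator $G$.
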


Recall that a family $\mathcal{U}$ of non-empty open sets of a space
$X$ is called a {\it $\pi$-base} if for each non-empty open set $V$
of $X$, there exists an $U\in\mathcal{U}$ such that $V\subset U$. The
{\it $\pi$-character} of $x$ in $X$ is defined by $\pi_{\chi}(x,
X)=\mbox{min}\{|\mathcal{U}|:\mathcal{U}\ \mbox{is a local}\
\pi\mbox{-base at}\ x\ \mbox{in}\ X\}$. The {\it $\pi$-character of
$X$} is defined by $\pi_{\chi}(X)=\mbox{sup}\{\pi_{\chi}(x, X):x\in
X\}$.

In \cite{DA}, A. Dow, R. Ram$\acute{\imath}$rez and V.V. Tkachuk proved that if $X$ is a separable $m_{2}$-space with the Baire property then $X$ has countable $\pi$-character. However, we find the proof has a gap. Next, we shall give out the correct proof. In fact, we have more general result, see Theorem~\ref{t6}.

\begin{theorem}\label{t6}
Suppose that a space $X$ is a separable space with the Baire property. If $X$ is $\sigma$-$m_{3}$ at some point $x\in X$, then it has countable $\pi$-character at the point $x$.
\end{theorem}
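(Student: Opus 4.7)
The plan is to follow the blueprint of the preceding theorem, with the countable dense set $D$ and the Baire property playing the roles that the cellularity bound played there. I would first assume without loss of generality that $x$ is not isolated (otherwise $\pi\chi(x,X)=1$), fix a countable dense subset $D=\{d_{k}:k\in\mathbb{N}\}\subset X$, and fix a $\sigma$-cushioned local pairbase $\mathscr{P}=\bigcup_{n\in\mathbb{N}}\mathscr{P}_{n}$ at $x$, normalized so that $\mathscr{P}_{n}\subset\mathscr{P}_{n+1}$ and each $P_{2}$ is closed. Reusing the construction from the preceding theorem, for each $n\in\mathbb{N}$ and $d\in D$ I put
$$W_{n,d}=X\setminus\overline{\bigcup\{P_{1}:P\in\mathscr{P}_{n},\ d\notin P_{2}\}},$$
which by cushioning is an open neighborhood of $d$. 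My candidate countable $\pi$-base at $x$ is the family $\mathcal{U}$ consisting of all non-empty intersections of the form $W_{n,F}:=\bigcap_{d\in F}W_{n,d}$, where $n\in\mathbb{N}$ and $F\subset D$ is finite.

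To verify that $\mathcal{U}$ is a local $\pi$-base at $x$, I would take an arbitrary open $V\ni x$, pick $P\in\mathscr{P}_{n}$ with $x\in P_{1}\subset P_{2}\subset V$, and note that $P_{1}\cap D\neq\emptyset$ by density. The goal is to select a finite $F\subset P_{1}\cap D$ such that $W_{n,F}\subset V$, equivalently such that
$$X\setminus V\ \subset\ \overline{\bigcup\{Q_{1}:Q\in\mathscr{P}_{n},\ Q_{2}\cap F=\emptyset\}}.$$
Here the Baire property enters. Considering the countable collection of candidates indexed by finite $F\subset P_{1}\cap D$, if no such $F$ worked, then for each $F$ there would be a non-empty open set $O_{F}\subset X\setminus V$ disjoint from the corresponding union of $Q_{1}$'s; the plan is to show, using the countability of the index set of finite $F$'s and the cushioning of $\mathscr{P}_{n}$, that one can assemble these $O_{F}$'s and their complements into a countable cover of a non-empty open set by nowhere dense pieces, contradicting the Baire property of $X$.

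The hard part will be producing the nowhere dense decomposition cleanly. For each finite $F\subset P_{1}\cap D$ one has to identify the "obstruction set" $O_{F}:=(X\setminus V)\setminus\overline{\bigcup\{Q_{1}:Q\in\mathscr{P}_{n},\ Q_{2}\cap F=\emptyset\}}$, verify it is open in $X\setminus V$ (or show its complement in a fixed open subset is nowhere dense), and aggregate over the countable index set so that Baire applies. This is exactly the step where the Dow–Ram\'{\i}rez–Tkachuk argument reportedly had a gap, so the delicate point is to interweave separability (reduction of the selection problem to an indexing over $D$), cushioning (the passage from $P_{1}$-unions to $P_{2}$-unions at each level $\mathscr{P}_{n}$), and Baire category (to force the required closure containment); I expect that, if the naive family $\mathcal{U}$ above does not directly suffice, one repairs it by enlarging $\mathcal{U}$ to also include countably many pre-selected sets $P^{n,k}_{1}$ with $d_{k}\in P^{n,k}_{1}\in\mathscr{P}_{n}$, and intersects those with the $W_{n,F}$'s.
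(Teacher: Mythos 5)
Your proposal does not close the essential step, and the route you sketch is aimed in the wrong direction. Two concrete problems. First, your candidate $\pi$-base consists of the sets $W_{n,F}=\bigcap_{d\in F}W_{n,d}$ with $d$ ranging over the countable dense set; but $W_{n,d}$ for $d\in P_{1}$ is obtained by deleting $\overline{\bigcup\{Q_{1}:d\notin Q_{2}\}}$, and nothing forces it (or a finite intersection of such sets) to sit inside $P_{2}$ --- cushioning gives no upper bound of that kind on $W_{n,d}$. You recognize this and reduce the problem to finding a finite $F\subset P_{1}\cap D$ with $X\setminus V\subset\overline{\bigcup\{Q_{1}:Q_{2}\cap F=\emptyset\}}$ (incidentally this is sufficient, not equivalent, to $W_{n,F}\subset V$; the correct reformulation is $X\setminus V\subset\bigcup_{d\in F}\overline{\bigcup\{Q_{1}:d\notin Q_{2}\}}$), but you never produce such an $F$. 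Second, the Baire-category step is only announced (``the plan is to show\dots'', ``I expect that\dots''), and the announced plan does not work: your obstruction sets $O_{F}$ form a downward-directed countable family of nonempty relatively open subsets of the closed set $X\setminus V$, and the Baire property of $X$ yields no contradiction from such a family (a decreasing sequence of nonempty open sets can have empty intersection even in a Baire space, and $X\setminus V$ is closed rather than open, so it need not inherit the Baire property at all).

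The paper applies Baire in a structurally different way. It keeps the sets $W_{ny}$ for \emph{all} $y\in X\setminus\{x\}$, and for each dense point $d_{n,m}\in D\cap\bigcup\{P_{1}:P\in\mathscr{P}_{n}\}$ forms the ``reversed'' set $B_{n,m}=\{y\in X\setminus\{x\}:d_{n,m}\in W_{ny}\}$. These sets are relatively closed, satisfy $B_{n,m}\subset P_{2}$ whenever $d_{n,m}\in P_{1}$ (if $y\notin P_{2}$ then $W_{ny}$ misses $P_{1}$, so it cannot contain $d_{n,m}$), and the countably many $B_{n,m}$ with $d_{n,m}\in P_{1}$ cover the nonempty open set $V=P_{1}\setminus\{x\}$, because for each $y\in V$ the density of $D$ places some $d_{n,m}$ in $W_{ny}\cap P_{1}$. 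Baire is then used in its standard form: a countable cover of a nonempty open set by relatively closed sets has a member with nonempty interior, and that interior is the required $\pi$-base element inside $P_{2}\subset U$. Your sketch never arrives at a countable relatively closed cover of an open set, which is the missing idea; without it the argument does not go through.
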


\begin{proof}
Suppose that $x$ is a non-isolated point in $X$, and that $D=\{d_{n}: n\in \mathbb{N}\}\subset X\setminus\{x\}$ is a dense subset of $X$. Let $\mathscr{P}=\bigcup\mathscr{P}_{n}$ be a $\sigma$-cushioned pairbase at the point $x$, where, for each $n\in \mathbb{N}$, $\mathscr{P}_{n}$ is cushioned. For each $n\in \mathbb{N}$, put $D_{n}=D\cap \bigcup\{P_{1}: P\in\mathscr{P}_{n}\}=\{d_{n, m}: m\in \mathbb{N}\}$. For each $y\in X\setminus\{x\}$ and $n\in \mathbb{N}$, put $$W_{ny}=X\setminus\overline{\bigcup\{P_{1}: P\in \mathscr{P}_{n}\ \mbox{and}\ y\not\in P_{2}\}}.$$ Then $W_{ny}$ is an open neighborhood of $y$. For each $n, m\in \mathbb{N}$, put $B_{n, m}=\{y\in X\setminus\{x\}: d_{n, m}\in W_{ny}\}$. It is easy to see that $B_{n, m}\subset \bigcap\{P_{2}: d_{n, m}\in P_{1}, P\in\mathscr{P}_{n}\}.$ Further, for each $n, m\in \mathbb{N}$, $B_{n, m}$ is closed in $X\setminus\{x\}$. In fact, let $y\in \overline{B_{n, m}}\setminus\{x\}$. Then $y\in W_{ny}$, and hence $W_{ny}\cap B_{n, m}\neq\emptyset$. For each $z\in W_{ny}\cap B_{n, m}$, we have $W_{nz}\subset W_{ny}$, and thus $d_{n, m}\in W_{nz}\subset W_{ny}$. So $y\in B_{n, m}$.
Let $\mathscr{U}=\{\mbox{int}(B_{n, m}): n, m\in \mathbb{N}\}.$ Then $\mathscr{U}$ is a countable $\pi$-base at the point $x$.

In fact, let any open subset $U$ with $x\in U$. Since $\mathscr{P}$ is a local pairbase at the point $x$, there exist an $n\in \mathbb{N}$ and $P\in \mathscr{P}_{n}$ such that $x\in P_{1}\subset P_{2}\subset U$, where $P=(P_{1}, P_{2})$. It is easy to see that, for each $m\in \mathbb{N}$, $d_{n, m}\in B_{n, m}\subset P_{2}$ whenever $d_{n, m}\in P_{1}.$
Let $V=P_{1}\setminus\{x\}$. Then $V$ is a non-empty open subset and has the Baire property. If $y\in V$, then $D_{n}\bigcap W_{ny}\bigcap V\neq\emptyset$, and hence there exists an $m\in \mathbb{N}$ such that $d_{n, m}\in W_{ny}\cap V$, i.e., $y\in B_{n, m}$. Therefore, $V\subset \bigcup\{B_{n, m}: d_{n, m}\in P_{1}\}$. Since $V$ has the Baire property, there exists an $m\in \mathbb{N}$ such that $G=\mbox{int}(B_{n, m})\neq\emptyset$. Hence $G\in\mathscr{U}$ and $G\subset B_{n, m}\subset P_{2}\subset U$. Therefore, $\mathscr{U}$ is a countable $\pi$-base at the point $x$.
\end{proof}

\begin{corollary}\cite{DA}
Suppose that a space $X$ is a separable space with the Baire property and $X$ is $m_{2}$ at some point $x\in X$. Then $X$ has countable $\pi$-character at the point $x$.
\end{corollary}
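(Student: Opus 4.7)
The plan is to build a countable family of open sets whose union is $\pi$-dense at $x$, indexed by pairs $(n,m)$ where $n$ runs through the cushioned levels of the pairbase and $m$ runs through a countable dense set. First, I would fix a $\sigma$-cushioned local pairbase $\mathscr{P}=\bigcup_n\mathscr{P}_n$ at $x$, may assume it is increasing, and fix a countable dense set $D=\{d_k:k\in\mathbb{N}\}\subset X\setminus\{x\}$ (harmless if $x$ is non-isolated; otherwise the conclusion is trivial). I would enumerate, for each $n$, the points of $D$ that fall into first coordinates of pairs at level $n$: $D_n=D\cap\bigcup\{P_1:P\in\mathscr{P}_n\}=\{d_{n,m}:m\in\mathbb{N}\}$.

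Next, for each $y\in X\setminus\{x\}$ and each $n$, I would introduce the open neighborhood
\[
W_{ny}=X\setminus\overline{\bigcup\{P_1:P\in\mathscr{P}_n,\ y\notin P_2\}},
\]
which is open by the cushioning of $\mathscr{P}_n$ and clearly contains $y$. Then, for each pair $(n,m)$, I would define
\[
B_{n,m}=\{y\in X\setminus\{x\}:d_{n,m}\in W_{ny}\}.
\]
The first technical step is to verify that $B_{n,m}$ is relatively closed in $X\setminus\{x\}$: if $y\in\overline{B_{n,m}}\setminus\{x\}$, then $y\in W_{ny}$, so $W_{ny}\cap B_{n,m}\neq\emptyset$; pick $z$ in this intersection and observe the monotonicity $W_{nz}\subset W_{ny}$ (because $z\in W_{ny}$ forces every relevant pair containing $z$ to contain $y$ in its second coordinate too), which gives $d_{n,m}\in W_{nz}\subset W_{ny}$ and hence $y\in B_{n,m}$.

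Now I would claim that $\mathscr{U}=\{\mathrm{int}(B_{n,m}):n,m\in\mathbb{N},\ \mathrm{int}(B_{n,m})\neq\emptyset\}$ is a countable $\pi$-base at $x$. Given any open $U\ni x$, I would choose $P\in\mathscr{P}_n$ with $x\in P_1\subset P_2\subset U$, and look at the nonempty open set $V=P_1\setminus\{x\}$. For any $y\in V$, density of $D$ in $W_{ny}\cap V$ produces some $d_{n,m}\in W_{ny}\cap V\subset P_1$, so $y\in B_{n,m}$; thus $V\subset\bigcup\{B_{n,m}:d_{n,m}\in P_1\}$. Since $V$ inherits the Baire property, some $B_{n,m}$ (with $d_{n,m}\in P_1$) has nonempty interior; this interior lies in $B_{n,m}\subset P_2\subset U$, witnessing $\pi$-base membership.

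The main obstacle I expect is the monotonicity $W_{nz}\subset W_{ny}$ used in the closedness argument and the verification that the inclusion $B_{n,m}\subset P_2$ really holds whenever $d_{n,m}\in P_1$ — these rely on carefully tracking which pairs participate in the defining union, but both follow from unpacking the definition of $W_{ny}$ against the condition $y\notin P_2$. Once those are in place, the Baire property step is a one-line invocation, and countability is manifest.
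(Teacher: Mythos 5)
Your argument is, step for step, the paper's own proof of Theorem~\ref{t6}: the same neighborhoods $W_{ny}$, the same relatively closed sets $B_{n,m}$ with the same monotonicity argument for their closedness, and the same Baire-category step applied to $V=P_1\setminus\{x\}$; the paper obtains the present corollary simply as an immediate consequence of that theorem, since $m_2$ at $x$ yields a ($\sigma$-)cushioned local pairbase at $x$. So you have taken essentially the same route, merely inlining the proof of the general theorem instead of citing it.
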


\begin{corollary}
If $G$ is a separable $\sigma$-$m_{3}$-group with the Baire property then $G$ is metrizable.
\end{corollary}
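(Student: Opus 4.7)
The plan is to combine Theorem~\ref{t6} with the homogeneity and continuity of the group operations, and then invoke the Birkhoff--Kakutani metrization theorem. First, since $G$ is a separable Baire space which is $\sigma$-$m_{3}$ at every point, Theorem~\ref{t6} hands us a countable local $\pi$-base at some point $x\in G$. Because any topological group is homogeneous (left translations are autohomeomorphisms), this immediately gives $\pi\chi(e, G)\leq \omega$ at the identity~$e$.

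The main step is to upgrade this to $\chi(e, G)\leq \omega$, i.e.\ first countability at $e$. Given a countable local $\pi$-base $\{V_{n}:n\in \mathbb{N}\}$ at $e$ consisting of non-empty open sets, I would consider the family $\{V_{n}^{-1}V_{n}:n\in \mathbb{N}\}$. Each $V_{n}^{-1}V_{n}$ is open (being a union of right translates of the open set $V_{n}^{-1}$) and is a neighborhood of $e$, since $v^{-1}v=e$ for any $v\in V_{n}$. The claim is that this family is a local base at $e$: given any open neighborhood $U$ of $e$, by joint continuity of $(g,h)\mapsto g^{-1}h$ at $(e,e)$ there is a neighborhood $W$ of $e$ with $W^{-1}W\subseteq U$; since $\{V_{n}\}$ is a local $\pi$-base at $e$, some $V_{n}\subseteq W$, whence $V_{n}^{-1}V_{n}\subseteq W^{-1}W\subseteq U$.

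Once first countability at $e$ is established, homogeneity gives first countability of $G$ globally, and the Birkhoff--Kakutani theorem yields that the Hausdorff (in fact regular, by the blanket hypothesis of the paper) first-countable topological group $G$ is metrizable.

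The only genuinely non-routine step is the passage from countable $\pi$-character to countable character. In an arbitrary topological space this implication fails, so the argument truly relies on the algebraic trick of converting a local $\pi$-base $\{V_{n}\}$ into a local base $\{V_{n}^{-1}V_{n}\}$ via the group operation; everything else is either a direct quotation of Theorem~\ref{t6} or a standard invocation of Birkhoff--Kakutani.
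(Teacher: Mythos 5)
Your proof is correct, and it runs on the same engine as the paper's, though the paper compresses everything into one line. The paper's argument is: Theorem~\ref{t6} gives countable $\pi$-character, separability then gives countable $\pi$-weight (take the union of countable local $\pi$-bases over a countable dense set), and the cardinal-function identity $w(G)=\pi w(G)$ for topological groups yields second countability, hence metrizability. You instead stay local: homogeneity moves the countable local $\pi$-base to the identity, the standard trick $\{V_{n}\}\mapsto\{V_{n}^{-1}V_{n}\}$ upgrades countable $\pi$-character to countable character (this is exactly the proof of $\chi(e,G)=\pi\chi(e,G)$, which is also the substance hiding inside the identity $w=\pi w$ that the paper cites), and Birkhoff--Kakutani finishes. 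The trade-off is minor: the paper's route lands on the stronger conclusion that $G$ is second countable and needs only Urysohn's metrization theorem at the end, while your route is self-contained where the paper merely quotes a known identity, and it makes explicit that the only non-routine step is the algebraic conversion of a $\pi$-base into a base. Your verification of that step ($V_{n}^{-1}V_{n}$ is an open neighborhood of $e$; continuity of $(g,h)\mapsto g^{-1}h$ produces $W$ with $W^{-1}W\subseteq U$; the $\pi$-base property puts some $V_{n}$ inside $W$) is accurate.
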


\begin{proof}
Since $w(G)=\pi w(G)$ for any topological group, $G$ is metrizable.
\end{proof}

{\bf Acknowledgements}. We wish to thank
the reviewers for the detailed list of corrections, suggestions to the paper, and all her/his efforts
in order to improve the paper. In particular, Lemma~\ref{l1} is due to the reviewers, which gives an easy proofs for Theorems~\ref{t0}, \ref{t3}, \ref{t5} and~\ref{t4} in our original paper.

\bigskip

\end{document}